\documentclass[11pt]{amsproc}

\usepackage{enumerate}
\usepackage{csquotes}

\usepackage{xargs}
\usepackage{amsmath,amssymb,amsfonts,amsthm}
\usepackage{mathtools}

\usepackage[backrefs]{amsrefs}

\newcommandx{\set}[2][2=\empty]{\{#1\ifx#2\empty\else\,|\,#2\fi\}}
\newcommandx{\lrset}[2][2=\empty]{\left\{#1\ifx#2\empty\else\,\middle|\,#2\fi\right\}}
\newcommand{\card}[1]{\lvert#1\rvert}
\newcommandx{\gensubgrp}[2][2=\empty]{\langle#1\ifx#2\empty\else\,|\,#2\fi\rangle}
\newcommandx{\gensubsp}[2][2=\empty]{\langle#1\ifx#2\empty\else\,|\,#2\fi\rangle}
\newcommand{\nats}{\mathbb{N}}
\newcommand{\ints}{\mathbb{Z}}
\newcommand{\finfield}{\mathbb{F}}
\newcommand{\abs}[1]{\lvert#1\rvert}
\DeclareMathOperator{\ord}{ord}
\DeclareMathOperator{\tr}{tr}
\DeclareMathOperator{\N}{N}
\DeclareMathOperator{\id}{id}
\DeclareMathOperator{\diag}{diag}
\newcommand{\rest}[1]{#1\rvert}
\DeclareMathOperator{\rk}{rk}
\DeclareMathOperator{\supp}{supp}
\DeclareMathOperator{\lcm}{lcm}
\newcommand{\floor}[1]{\lfloor#1\rfloor}
\DeclareMathOperator{\codim}{codim}
\DeclareMathOperator{\fix}{fix}
\newcommand{\powset}{\mathcal P}
\DeclareMathOperator{\Sub}{Sub}
\newcommand{\trivgrp}{\mathbf{1}}
\DeclareMathOperator{\Cycgrp}{C}
\DeclareMathOperator{\Sym}{Sym}
\DeclareMathOperator{\Sgrp}{S}
\newcommand{\M}{\mathbf{M}}
\DeclareMathOperator{\GL}{GL}
\DeclareMathOperator{\Sp}{Sp}
\DeclareMathOperator{\GO}{GO}
\DeclareMathOperator{\GU}{GU}
\newcommand{\PM}{\mathrm{P}\mathbf{M}}
\DeclareMathOperator{\PGL}{PGL}
\DeclareMathOperator{\PSp}{PSp}
\DeclareMathOperator{\PGO}{PGO}
\DeclareMathOperator{\PGU}{PGU}
\DeclareMathOperator{\stab}{stab}
\newcommand{\C}{\mathbf{C}}
\newcommand{\Z}{\mathbf{Z}}
\DeclareMathOperator{\Aut}{Aut}
\DeclareMathOperator{\Gal}{Gal}
\DeclareMathOperator{\orb}{orb}
\newcommand{\calH}{\mathcal{H}}
\newcommand{\calS}{\mathcal{S}}
\newcommand{\calU}{\mathcal{U}}
\newcommand{\calV}{\mathcal{V}}

\theoremstyle{plain}
\newtheorem{theorem}{Theorem}
\newtheorem{lemma}{Lemma}
\newtheorem{corollary}{Corollary}

\theoremstyle{definition}
\newtheorem{remark}{Remark}

\title[Isomorphism questions for metric ultraproducts]{Isomorphism questions for metric ultraproducts of finite quasisimple groups}
\author{Jakob Schneider}
\address{J.~Schneider, TU Dresden, 01062 Dresden, Germany}
\email{jakob.schneider@tu-dresden.de}

\begin{document}
\begin{abstract}
	New results on metric ultraproducts of finite simple groups are established. We show that the isomorphism type of a simple metric ultraproduct of groups $X_{n_i}(q)$ ($i\in I$) for $X\in\set{\PGL,\PSp,\PGO^{(\varepsilon)},\PGU}$ ($\varepsilon=\pm$) along an ultrafilter $\calU$ on the index set $I$ for which $n_i\to_\calU\infty$ determines the type $X$ and the field size $q$ up to the possible isomorphism of a metric ultraproduct of groups $\PSp_{n_i}(q)$ and a metric ultraproduct of groups $\PGO_{n_i}^{(\varepsilon)}(q)$. This extends results of Thom and Wilson~\cite{thomwilson2014metric}.
\end{abstract}

\maketitle

\section{Introduction}
In \cite{thomwilson2014metric,thomwilson2018some} Thom and Wilson discussed various properties of metric ultraproducts of finite simple groups. In particular, they asked the question which such ultraproducts can be isomorphic. In Theorem~2.2 of \cite{thomwilson2014metric}, a metric ultraproduct of alternating groups is distinguished from a metric ultraproduct of classical groups of Lie type, where the permutation degrees resp.\ dimensions of the natural module tend to infinity. This is done by considering the structure of centralizers of torsion elements in these groups (see Theorems~2.8 and~2.9 of \cite{thomwilson2014metric}).
In the case of a metric ultraproduct of classical groups of Lie type, in Theorem~2.8 of \cite{thomwilson2014metric}, investigating the structure of such centralizers of semisimple and unipotent torsion elements, Thom and Wilson even extract the \enquote*{limit characteristic} of the group. At the end of Section~2 of \cite{thomwilson2014metric} they ask which metric ultraproducts of classical groups of different types can be isomorphic. 

In this note, we will give an almost complete answer to this question in the case when the field sizes are bounded. We will show that for a metric ultraproduct of alternating or classical groups of Lie type of unbounded rank over fields of bounded size one can extract the Lie type (up to one exception). Also one can extract the \enquote*{limit field size}. Our results are summed up in Theorem~\ref{thm:non_iso_main} below. To state it, we first need to introduce some notation.

\vspace{3mm}

Let $\calH=(H_i)_{i\in I}$ be a sequence of groups where either $H_i=\Sgrp_{n_i}$ is a symmetric group or $H_i=X_i(q_i)$ is a classical group of Lie type $X_i$ over the finite field $\finfield_{q_i}$ with $q_i$ elements (resp.\ $\finfield_{q_i^2}$ in the unitary case; $i\in I$). In the latter case, we let each $X_i$ be one of $\GL_{n_i}$, $\Sp_{2m_i}$, $\GO_{2m_i}^{\pm}$, $\GO_{2m_i+1}$ ($q_i$ odd), or $\GU_{n_i}$ for suitable $m_i,n_i\in\ints_+$ ($i\in I$).

Recall that a \emph{norm} $\ell$ on a group $H$ is a function $H\to[0,\infty]$ such that $\ell(h)=0$ iff $h=1_H$, $\ell(h)=\ell(h^{-1})=\ell(h^g)$, and $\ell(gh)\leq\ell(g)+\ell(h)$ for all $g,h\in H$. Call a pair $(H,\ell)$, where $H$ is a group and $\ell$ a norm on it, a \emph{normed group}. Recall that the metric ultraproduct of a sequence of normed groups $(H_i,\ell_i)_{i\in I}$ along an ultrafilter $\calU$ on the set $I$ is defined as the quotient $\left.\prod_{i\in I}{H_i}\middle/N\right.$, where $N\coloneqq\set{(h_i)_{i\in I}\in\prod_{i\in I}{H_i}}[\lim_\calU{\ell_i(h_i)}=0]$ is a normal subgroup.

Throughout, let $G\coloneqq\calH_\calU^{\rm met}$ be the metric ultraproduct of the groups $H_i$ from above equipped with the normalized Hamming norm $\ell_{\rm H}(\sigma)\coloneqq\card{\supp(\sigma)}/n=\card{\set{x\in\set{1,\ldots,n}}[x.\sigma\neq x]}/n$ resp.\ the normalized rank norm $\ell_{\rk}(g)\coloneqq\rk(1-g)/n$ when $H_i$ is a symmetric resp.\ a classical linear group of Lie type. Assume that the permutation degrees resp.\ dimensions of the natural module $n_i$ of $H_i$ ($i\in I$) tend to infinity along $\calU$.

Note that, since $\calU$ is an ultrafilter, we may assume that each group $H_i$ is of the same type, i.e., all groups $H_i$ are either symmetric, linear, symplectic, orthogonal, or unitary groups. In these five distinct cases, we write $\Sgrp_\calU$, $\GL_\calU$, $\Sp_\calU$, $\GO_\calU$, or $\GU_\calU$ for $G$. Also, when the field sizes $q_i$ are bounded, we may assume that $q_i=q$ is constant ($i\in I$), setting $q\coloneqq\lim_\calU{q_i}$. Throughout, set $Z\coloneqq\Z(G)$ to be the \emph{center} of $G$ and $\overline{G}\coloneqq G/Z$.

If the groups $H_i$ ($i\in I$) are symmetric groups, then $Z=\trivgrp$ and $\overline{G}=G$.
Now assume that all groups $H_i$ are of type $X(q_i)$ ($i\in I$; i.e., they are not symmetric groups). Then $\overline{G}=G/Z=\overline{\calH}_\calU^{\rm met}$ is the metric ultraproduct of the groups $\overline{H}_i\coloneqq H_i/\Z(H_i)$ with respect to the projective rank norm $\ell_{\rm pr}(\overline{h})\coloneqq\inf\set{\ell_{\rk}(h)}[h\text{ is a lift of }\overline{h}\text{ in }H]$, which is defined on the general projective linear group. By the results from \cite{liebeckshalev2001diameters}, $\overline{G}$ is the unique simple quotient of $G$.

Similarly to the above, write $\PGL_\calU$, $\PSp_\calU$, $\PGO_\calU$, or $\PGU_\calU$ for $\overline{G}$ when all the groups $H_i$ ($i\in I$) are linear, symplectic, orthogonal, or unitary groups. Moreover, in this case, if all the fields $\finfield_{q_i}$ ($i\in I$) are equal to $\finfield_q$ (or $\finfield_{q^2}$ in the unitary case), we write $\GL_\calU(q)$, $\Sp_\calU(q)$, $\GO_\calU(q)$, $\GU_\calU(q)$ resp.\ $\PGL_\calU(q)$, $\PSp_\calU(q)$, $\PGO_\calU(q)$, $\PGU_\calU(q)$ for $G$ resp.\ $\overline{G}$. Write $\M_n(k)$ for the matrix ring of degree $n$ over the field $k$ and $\PM_n(k)$ for the associated projective space $(\M_n(k)\setminus\set{0})/k^\times$. Set $\M_n(q)\coloneqq\M_n(\finfield_q)$ and $\PM_n(q)\coloneqq\PM_n(\finfield_q)$. Also write $\M_\calU$, $\M_\calU(q)$ resp.\ $\PM_\calU$, $\PM_\calU(q)$ for the metric ultraproduct of the spaces $\M_{n_i}(q_i)$, $\M_{n_i}(q)$ resp.\ $\PM_{n_i}(q_i)$, $\PM_{n_i}(q)$ with respect to the metrics $d_{\rk}(g,h)\coloneqq\rk(g-h)/n$ and $d_{\rm pr}(\overline{g},\overline{h})\coloneqq\inf\set{d_{\rk}(g,h)}[g,h\text{ are lifts of }\overline{g},\overline{h}]$ ($i\in I$), so that $\GL_\calU\subseteq\M_\calU$, $\GL_\calU(q)\subseteq\M_\calU(q)$, $\PGL_\calU\subseteq\PM_\calU$, $\PGL_\calU(q)\subseteq\PM_\calU(q)$. Throughout, if not stated otherwise, let $k=\finfield_q$ when $G$ is classical non-unitary and $k=\finfield_{q^2}$ in the unitary case.

If all $H_i$ ($i\in I$) are symplectic, orthogonal, or unitary, write $f_i$ for the sesquilinear forms stabilized (and $Q_i$ for the quadratic form in the orthogonal case in characteristic two). 

\vspace{3mm}

The main result of this article is now as follows.

\begin{theorem}\label{thm:non_iso_main}
	Let $\overline{G}\cong\overline{G}_1\cong\overline{G}_2$ with $G_j=X_{j\calU_j}(q_j)$, where $X_j\in\set{\GL,\Sp,\GO,\GU}$ ($j=1,2$). Then it holds that $q_1=q_2$. Also we must have $X_1=X_2$ or  $\set{X_1,X_2}=\set{\Sp,\GO}$. Moreover, an ultraproduct $\overline{X}_{1\calU_1}$ where the sizes $q_i$ of the finite fields $\finfield_{q_i}$ ($i\in I_1$) tend to infinity along $\calU_1$ cannot be isomorphic to an ultraproduct $\overline{X}_{2\calU_2}(q)$.
\end{theorem}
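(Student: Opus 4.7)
The overall strategy is to strengthen Thom and Wilson's centralizer-analysis and extract two invariants from the abstract group $\overline{G}$: the field size $q$ and the Lie type $X$, the latter up to the $\{\Sp,\GO\}$ exception in characteristic two. Throughout I would exploit the inclusion $\overline{G}\subseteq\PM_\calU(q)$ and work with centralizers of semisimple elements of prescribed positive rank norm, so as to keep the relevant data visible after passage to the metric quotient.

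To extract $q$, my plan is to recover a canonical copy of $\finfield_q$ inside $\overline{G}$. Concretely, I would fix a semisimple lift $\tilde g\in G$ with two distinct eigenvalues of multiplicities $\floor{\alpha n_i}$ and $n_i-\floor{\alpha n_i}$ for some $\alpha\in(0,1)$. The centralizer of the image $g\in\overline{G}$ is, up to centre and bounded index, again a metric ultraproduct of classical groups of the same type over $\finfield_q$ of proportionally smaller rank. Iterating this descent and combining it with Thom and Wilson's extraction of the characteristic $p$, the only remaining discrete parameter is $q$ itself, which I would pin down via the structure of a definable split maximal torus: in each finite factor such a torus contains a distinguished subgroup isomorphic to $\finfield_q^\times$, and passing to the ultraproduct with constant $q_i=q$ returns the finite field $\finfield_q$ intact.

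To distinguish the types, I would argue as follows. The unitary case is singled out by the presence of an order-two Galois action on the recovered scalar field, corresponding to the Frobenius defining the Hermitian form. The linear case $\PGL$ is characterised among the remaining types by not preserving any nondegenerate form, equivalently by admitting transpose-inverse as an outer automorphism, which translates into conjugacy-class symmetries on involutions not shared by the other groups. Between $\PSp$ and $\PGO$ in odd characteristic, involution centralizers in the first case are (ultraproducts of) symplectic groups while in the second they are orthogonal groups; by the $q$- and type-recovery already established, these two metric ultraproducts are then non-isomorphic by induction on rank. In characteristic two the abstract isomorphism $\Sp_{2n}(q)\cong\GO_{2n+1}(q)$ passes to the metric ultraproducts, producing precisely the stated exception $\{X_1,X_2\}=\{\Sp,\GO\}$.

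For the unbounded-field statement, applying the same centralizer analysis to $\overline{X}_{1\calU_1}$ with $q_i\to_{\calU_1}\infty$ produces a field invariant that is an ultraproduct of the $\finfield_{q_i}$ along $\calU_1$, hence necessarily infinite; in particular it cannot be isomorphic to the finite $\finfield_q$ coming from a constant-field ultraproduct. The hardest step will clearly be the recovery of $\finfield_q$: elements of zero rank norm vanish in the ultraproduct, so any candidate invariant must be built from elements of positive rank norm that remain visible after quotienting by the null-norm subgroup, and the descent through iterated centralizers must stabilise rather than degenerate under the metric ultraproduct construction.
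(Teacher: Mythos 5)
Your proposal is in the right neighborhood --- the paper also proceeds by analysing centralizers (in fact double centralizers) of semisimple torsion elements, and you correctly identify that the key difficulty is producing invariants from elements of positive rank norm that survive the metric quotient. But several of the concrete steps you propose do not go through, and the argument you give is missing the decisive invariant that the paper uses.

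First, the recovery of $\finfield_q$ is not achieved by your split-torus idea. A maximal split torus in $\GL_{n_i}(q)$ is $(\finfield_q^\times)^{n_i}$, and the metric ultraproduct of these along $\calU$ is not $\finfield_q^\times$ but a vastly larger group; no copy of $\finfield_q$ sits in $\overline{G}$ in a way that is ``returned intact''. Likewise your claim that the centralizer of a semisimple element with two eigenvalue blocks is ``again a metric ultraproduct of classical groups of the same type'' is false in the forms cases: centralizers of semisimple elements in $\Sp$, $\GO$, $\GU$ decompose according to self-dual versus non-self-dual eigenvalue pairs and involve $\GL$, $\GU$, $\Sp$, and $\GO$ factors simultaneously, not a single classical group of the original type. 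What the paper actually does is restrict to \emph{torsion} semisimple elements (so that, by Lemma~\ref{lem:cond_cent_up_cents2} and Corollary~\ref{cor:tor_el_cent_up_cents}, the centralizer in the ultraproduct is the ultraproduct of the centralizers), compute the double centralizer $\C^2(\overline{g})$ explicitly as a finite abelian group (Equations~\eqref{eq:C^2_lin_cs} and~\eqref{eq:C^2_bil_un_cs}), and then read off the numerical invariant $e_{\overline{G}}(o)=\max_{h^o=1}\exp(\C^2(h))$, which equals $f_q(o)$, $f'_q(o)$, or $f''_q(o)$ according to type. Both the field size (via gcds of these exponents over a suitable set of orders $o$, together with a Diophantine input to separate the characteristics) and the Lie type are then extracted from this single invariant. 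Your proposal never identifies any concrete quantity playing this role.

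Second, your proposed method for distinguishing $\PSp$ from $\PGO$ in odd characteristic by ``induction on rank'' cannot work: the groups here are metric ultraproducts in which the ranks $n_i$ tend to infinity along $\calU$, so there is no finite rank parameter to induct on and no base case. In fact the paper \emph{does not} distinguish $\PSp_{\calU_1}(q)$ from $\PGO_{\calU_2}(q)$ for odd $q$ (Remark after the proof), consistently with the $\{\Sp,\GO\}$ exception in the statement; what the method delivers is that $f'_q(o)$ is the same for both types, so the double-centralizer exponent invariant genuinely cannot separate them. Finally, your characterisations of the unitary and linear cases appeal to properties of the natural module (``presence of an order-two Galois action on the recovered scalar field'', ``not preserving any nondegenerate form''), which are properties of a representation rather than of the abstract group and thus are not available once you only have the isomorphism type of $\overline{G}$. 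The paper instead distinguishes $\GU$ from the bilinear types and from $\GL$ purely numerically, by evaluating $e_{\overline{G}}(o)$ at carefully chosen $o$ (e.g.\ $o=(q^2+1)/\gcd\{2,q-1\}$ and $o=\frac{q^5+1}{\gcd\{5,q+1\}(q+1)}$).
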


Let us conclude this introduction by saying some words about the proof of Theorem~\ref{thm:non_iso_main}. Our strategy is to compute double centralizers of semisimple torsion elements of a fixed order $o\in\ints_+$ in the above metric ultraproducts. If the sizes $q_i$ ($i\in I$) of the fields $\finfield_{q_i}$ are bounded, it turns out that these are always finite abelian groups. Then we consider the maximal possible exponent which such a double centralizer can have. It turns out that this data is enough to determine the limit field size $q$ and the Lie type (up to the exception mentioned in Theorem~\ref{thm:non_iso_main}). If the field sizes $q_i$ ($i\in I$) tend to infinity, a double centralizer of such a torsion element of order $o>2$ is always infinite.  Throughout the article we will make frequent use of the classification given in Subsection~3.4.2~\S1 of \cite{schneider2019phd}.

\section{Description of conjugacy classes in $\Sgrp_\calU$, $\GL_\calU(q)$, and $\PGL_\calU(q)$}

In this section, we give a description of the conjugacy classes of groups of type $\Sgrp_\calU$ or $\PGL_\calU(q)$. We will make use of this in the subsequent sections. 

We start with some definitions. Write $\Sgrp_n=\Sym(\underline{n})$ for the symmetric group acting on the set $\underline{n}\coloneqq\set{1,\ldots,n}$. For a permutation $\sigma\in\Sgrp_n$ and $k\in\ints_+$, let $C_k(\sigma)\subseteq\Sgrp_n$ denote the set of all $k$-cycles of $\sigma$ and $c_k(\sigma)\coloneqq\card{C_k(\sigma)}$ denote the number of $k$-cycles of $\sigma$. Moreover, let $\Omega_k(\sigma)$ be the set on which all $k$-cycles of $\sigma$ act. Set $n_k(\sigma)\coloneqq\card{\Omega_k(\sigma)}=kc_k(\sigma)$ to be the cardinality of $\Omega_k(\sigma)$. Call the permutation $\sigma\in\Sgrp_n$ \emph{$k$-isotypic} when $n_k(\sigma)=n$, i.e., $\sigma$ has only cycles of length $k$. Call $\sigma$ \emph{isotypic} if it is $k$-isotypic for one number $k\in\ints_+$. Similarly, for $g\in\M_n(k)$ and a monic primary polynomial $\chi\in k[X]$ (i.e., a power of an irreducible polynomial) let $c_\chi(g)$ be the number of Frobenius blocks $F(\chi)$ in the generalized Jordan normal form 
$$
g\cong\bigoplus_{\substack{\chi\in k[X]\\ \chi\text{ monic primary}}}{F(\chi)^{\oplus c_\chi(g)}}
$$
of $g$. Let $V_\chi(g)$ be the (not unique) subspace on which $g$ acts as $F(\chi)^{\oplus c_\chi(g)}$ with respect to the above normal form and set $n_\chi(g)\coloneqq\dim(V_\chi(g))$. Then, of course, $n_\chi(g)=\deg(\chi)c_\chi(g)$. For a monic polynomial $\chi\in k[X]$, say that $g$ is \emph{$\chi$-isotypic} if $g\cong F(\chi)^{\oplus c}$ for some $c\in\ints_+$. 

At first we consider $\GL_\calU(q)$ instead of $\PGL_\calU(q)$. Throughout this article, all polynomials from $k[X]$ that occur are meant to be \emph{monic} polynomials. 

\vspace{3mm}

\emph{Conjugacy classes in $\Sgrp_\calU$ and $\GL_\calU(q)$.}
For an integer $k\in\ints_+$ and a polynomial $\xi\in k[X]$ define $r_k(\sigma)\coloneqq\card{\fix(\sigma^k)}/n\text{ resp.\ }r_\xi(g)\coloneqq\dim(\ker(\xi(g)))/n$ for $\sigma\in\Sgrp_n$ resp.\ $g\in\M_n(q)$. Here $\fix(\sigma)\coloneqq\set{x\in\underline{n}}[x.\sigma=x]$ is the set of \emph{fixed points} of the permutation $\sigma$. Extend this definition to $\Sgrp_\calU$ and $\M_\calU(q)$ by setting $r_k(\sigma)\coloneqq\lim_\calU{r_k(\sigma_i)}$ and $r_\xi(g)\coloneqq\lim_\calU{r_\xi(g_i)}$ for $\sigma=\overline{(\sigma_i)}_{i\in I}$ resp.\ $g=\overline{(g_i)}_{i\in I}$.
Both expressions are well-defined, since for $\sigma=\overline{(\sigma_i)}_{i\in I}=\overline{(\tau_i)}_{i\in I}\in\Sgrp_\calU$ one has
\begin{align}
\frac{1}{n_i}\left|\card{\fix(\sigma_i^k)}-\card{\fix(\tau_i^k)}\right|&\leq d_{\rm H}(\sigma_i^k,\tau_i^k)\nonumber\\
&\leq d_{\rm H}(\sigma_i^k,\sigma_i^{k-1}\tau_i)+\cdots+d_{\rm H}(\sigma_i\tau_i^{k-1},\tau_i^k)\label{eq:wll_def_r_k}\\
&=kd_{\rm H}(\sigma_i,\tau_i)\to_\calU 0.\nonumber
\end{align}
Similarly, if $g=\overline{(g_i)}_{i\in I}=\overline{(h_i)}_{i\in I}\in\GL_\calU(q)$ and $\xi=a_0+a_1X+\cdots+a_{k-1}X^{k-1}+X^k\in k[X]$ we have
\begin{align}
\frac{1}{n_i}\left|\dim(\ker\xi(g_i))-\dim(\ker\xi(h_i))\right|&\leq d_{\rm rk}(\xi(g_i),\xi(h_i))\nonumber\\
&\leq\sum_{j=0}^k{d_{\rm rk}(a_jg_i^j,a_jh_i^j)}\nonumber\\
&\leq\sum_{j=0}^k{d_{\rm rk}(g_i^j,h_i^j)}\label{eq:wll_def_r_chi}\\
&\leq\left(\sum_{j=0}^k{j}\right)d_{\rm rk}(g_i,h_i)=\binom{k+1}{2}d_{\rm rk}(g_i,h_i)\nonumber
\end{align}
and the latter tends to zero along $\calU$. Here we used the same trick as in Estimate~\eqref{eq:wll_def_r_k} above to bound $d_{\rm rk}(g_i^j,h_i^j)$ by $jd_{\rm rk}(g_i,h_i)$ ($j=0,\ldots,k$) in Estimate~\eqref{eq:wll_def_r_chi}. Write $r(\sigma)\coloneqq(r_k(\sigma))_{k\in\ints_+}$ and $r(g)\coloneqq (r_\xi(g))_{\xi\in k[X]}$.
Now define $q_k(\sigma)$ for $k\in\ints_+$ via the equality
$$
\sum_{d\mid k}{q_d(\sigma)}=r_k(\sigma),
$$ 
for all $k\in\ints_+$. Write $q(\sigma)\coloneqq(q_k(\sigma))_{k\in\ints_+}$. Applying M\"obius inversion, we obtain
$$
q_k(\sigma)=\sum_{d\mid k}{\mu(k/d)r_d(\sigma)}.
$$ 
Alternatively, one can think of $q_k(\sigma)$ as the $\calU$-limit of the normalized cardinality of the support of all $k$-cycles in $\sigma_i$ ($i\in I$), i.e., 
$$
q_k(\sigma)=\lim_\calU{n_k(\sigma_i)/n_i}=k\lim_\calU{c_k(\sigma_i)/n_i}.
$$

Similarly, for $\chi\in k[X]$ primary define $q_\chi(g)$ via the equality
$$
r_\xi(g)=\sum_{\chi\text{ primary}}{\frac{\deg(\gcd\set{\chi,\xi})}{\deg(\chi)} q_\chi(g)},
$$
for all polynomials $\xi\in k[X]$. Here $\gcd S$ is the greatest common divisor of the elements from $S$. Write $q(g)\coloneqq(q_\chi(g))_{\chi\text{ primary}}$. Alternatively, one can think of $q_\chi(g)$ as the $\calU$-limit of the normalized dimensions of the (not unique) subspaces $V_\chi(g_i)$ ($i\in I$), i.e.,
$$
q_\chi(g)=\lim_\calU{n_\chi(g_i)/n_i}=k_\chi\lim_\calU{c_\chi(g_i)/n_i},
$$
where $k_\chi=\deg(\chi)=e\deg(i)$. This is because, when $g$ acts as $F(\chi)$ and $\xi\in k[X]$, then $\dim(\ker(\xi(g)))=\deg(\gcd\set{\chi,\xi})$.

We claim that the conjugacy classes in $\Sgrp_\calU$ resp.\ $\M_\calU(q)$ are in one-to-one correspondence with all tuples $(q_k(\sigma))_{k\in\ints_+}$ resp.\ $(q_\chi(g))_{\chi \text{ primary}}$, where the only condition on the sequences are that 
$$
\sum_{k\in\ints_+}{q_k(\sigma)}\leq 1\text{ resp.\ }\sum_{\chi\text{ primary}}{q_\chi(g)}\leq 1.
$$
Here we let $\GL_\calU(q)$ act on $\M_\calU(q)$ by conjugation.
The element $g$ lies in $\GL_\calU(q)$ if and only if $q_\chi(g)=0$ for $\chi=X^e$ ($e\geq 1$).
Indeed, one sees easily that $r(\sigma)$ resp.\ $r(g)$ is conjugacy invariant, and so is $q(\sigma)$ resp.\ $q(g)$ for $\sigma\in\Sgrp_\calU$ and $g\in\M_\calU(q)$.

To see the converse, let $\sigma=\overline{(\sigma_i)}_{i\in I},\tau=\overline{(\tau_i)}_{i\in I}\in\Sgrp_\calU$ resp.\ $g=\overline{(g_i)}_{i\in I},h=\overline{(h_i)}_{i\in I}\in\M_\calU(q)$ be elements such that $q(\sigma)=q(\tau)$ resp.\ $q(g)=q(h)$. 

Find a sequence $(N_i)_{i\in I}$ tending to infinity along $\calU$ such that 
$$
\sum_{k=1}^{N_i}{\abs{q_k(\sigma)-q_k(\sigma_i)}},\ \sum_{k=1}^{N_i}{\abs{q_k(\tau)-q_k(\tau_i)}}\to_\calU 0
$$
resp.\ such that
$$
\sum_{\substack{\chi\text{ primary}\\ \deg(\chi)\leq N_i}}{\abs{q_\chi(g)-q_\chi(g_i)}},\sum_{\substack{\chi\text{ primary}\\ \deg(\chi)\leq N_i}}{\abs{q_\chi(h)-q_\chi(h_i)}}\to_\calU 0.
$$
Then by the triangle inequality
$$
\frac{1}{n_i}\sum_{k=1}^{N_i}{\abs{n_k(\sigma_i)-n_k(\tau_i)}}\to_\calU 0
$$
resp.\ 
$$
\frac{1}{n_i}\sum_{\substack{\chi\text{ primary}\\ \deg(\chi)\leq N_i}}{\abs{\dim(V_\chi(g_i))-\dim(V_\chi(h_i))}}\to_\calU 0.
$$

Hence we can conjugate a big part of $\bigsqcup_{k=1}^{N_i}{\Omega_k(\sigma_i)}$ equivariantly  to a big part of $\bigsqcup_{k=1}^{N_i}{\Omega_k(\tau_i)}$ resp.\ an almost fulldimensional part of 
$$
\bigoplus_{\substack{\chi\text{ primary}\\ \deg(\chi)\leq N_i}}{V_\chi(g_i)}\quad\text{ to }\bigoplus_{\substack{\chi\text{ primary}\\ \deg(\chi)\leq N_i}}{V_\chi(h_i)}$$ 
equivariantly (with no error in the limit; here again $V_\chi(g_i)$ resp.\ $V_\chi(h_i)$ are not unique). The remaining part of $\sigma_i$ and $\tau_i$ resp.\ $g_i$ and $h_i$ can be modified into one big cycle resp.\ one big Frobenius block with no change of $\sigma$ and $\tau$ resp.\ $g$ and $h$, since $N_i\to_\calU\infty$. Then we conjugate this cycle resp.\ Frobenius block onto the other.

\vspace{3mm}

\emph{The case of $\PM_\calU(q)$.}
Let the group $k^\times=\finfield_q^\times$ act on all (monic) polynomials $\xi\in k[X]$ by $\xi.z\coloneqq z^{-k_\xi}\xi(z X)$, where $k_\xi\coloneqq\deg(\xi)$. Extend this action to all tuples $q=(q_\chi)_{\chi\text{ primary}}$ with $\sum_{\chi\text{ primary}}{q_\chi}\leq 1$ via 
$$
q.z=(q_{\chi.z})_{\chi\text{ primary}}
$$ 
and denote by $\overline{q}$ the orbit $\orb_{k^\times}(q)$ of $q$ under this action of $k^\times$.

Let $\overline{G}=\PGL_\calU(q)$. We claim that the conjugacy classes of elements $\overline{g}\in\PM_\calU(q)$ are classified by the bijection $\overline{g}^{\overline{G}}\mapsto\overline{(q_\chi(g))}_{\chi\text{ primary}}$, where $g$ is any lift of $\overline{g}$ in $\M_\calU(q)$ (here we exclude the tuple $q$ where $q_X=1$ and $q_\chi=0$ otherwise).

Indeed, the map is well-defined, since any other $h$ such that $\overline{h}=\overline{g}\in\PM_\calU(q)$ is of the form $z g$ for some $z\in k^\times$ (as $k=\finfield_q$ is finite), so that $(q_\chi(h))_{\chi\text{ primary}}=(q_\chi(z g))_{\chi\text{ primary}}=(q_{\chi.z}(g))_{\chi\text{ primary}}=q(g).z$. Also $q$ is constant on conjugacy classes of $\M_\calU(q)$ (under the action of $\GL_\calU(q)$).

Conversely, if $q_\chi(h)=q_{\chi.z}(g)=q_\chi(z g)$ for some fixed $z\in k^\times$ and all $\chi\in k[X]$ primary, then from the above we derive that the elements $g$ and $z^{-1}h$ of $\M_\calU(q)$ are conjugate, so that $\overline{g}$ and $\overline{h}$ are conjugate in $\PM_\calU(q)$. This proves the claim.

\begin{remark}\label{rmk:oth_grps}
	For $G$ of type $\Sp_\calU(q)$, $\GO_\calU(q)$, or $\GU_\calU(q)$ the conjugacy classes of elements $g\in G$ for which $\sum_{\chi\text{ primary}}{q_\chi(g)}=1$ are still characterized by the tuples $(q_\chi(g))_{\chi\text{ primary}}$. The only necessary additional restriction on these tuples is that $q_\chi(g)=q_{\chi^\ast}(g)$, where $\chi^\ast=a_0^{-\sigma}X^{\deg(\chi)}\chi^\sigma(X^{-1})$ is the \emph{dual} polynomial of $\chi=a_0+a_1X+\cdots+a_{k-1}X^{k-1}+X^k$ (here $\sigma\colon k\to k$ is the identity if $G$ is not of type $\GU_\calU(q)$ and the $q$-Frobenius $\finfield_{q^2}\to\finfield_{q^2}$; $x\mapsto x^q$ if $G$ is of type $\GU_\calU(q)$; $\chi^\sigma$ is the polynomial $\chi$ with coefficients twisted by $\sigma$; cf.\ Subsection~3.4.2~\S1 of \cite{schneider2019phd}).
	
	Indeed, assume $g=\overline{(g_i)}_{i\in I},h=\overline{(h_i)}_{i\in I}\in G$, $q(g)=q(h)$, $q_\chi(g)=q_{\chi^\ast}(g)$ for all $\chi\in k[X]$ primary, and 
	$$
	\sum_{\chi\text{ primary}}{q_\chi(g)}=\sum_{\chi\text{ primary}}{q_\chi(h)}=1.
	$$ 
	Then $g$ is conjugate to $h$.
	
	This holds, since on all but constantly many Frobenius blocks $F(\chi)$ of $g_i$ resp.\ $h_i$ ($i\in I$), where $\chi$ is a self-dual primary polynomial, and on all but constantly many blocks $F(\chi)\oplus F(\chi^\ast)\cong F(\chi\chi^\ast)$, where $\chi$ is not self-dual and primary, according to Subsection~3.4.2~\S1 of \cite{schneider2019phd}, the form $f_i$ (and in characteristic two the quadratic form $Q_i$) is uniquely determined up to linear equivalence, so that we can map these blocks of $g_i$ to such blocks of $h_i$ and extend this partial map by Witt's lemma.
	
	Conversely, if we have a tuple $(q_\chi)_{\chi\text{ primary}}$ such that $\sum_{\chi\text{ primary}}{q_\chi}=1$ and $q_\chi=q_{\chi^\ast}$, we can see from Fact~3.40 of \cite{schneider2019phd} that there exists an element $g\in G$ such $q_\chi(g)=q_\chi$ for all $\chi\in k[X]$ primary.
	
	Recall that $\overline{G}=G/Z$, where $Z=\Z(G)$. For a tuple $q=(q_\chi)_{\chi\text{ primary}}$ let $\overline{q}$ denote its orbit $\orb_Z(q)$ under the action of $Z\leq k^\times$. Then for the elements $\overline{g}\in\overline{G}$ (i.e., $\overline{G}$ is one of $\PSp_\calU(q)$, $\PGO_\calU(q)$, or $\PGU_\calU(q)$) such that $\sum_{\chi\text{ primary}}{q_\chi(g)}=1$ for one lift $g\in G$ of $\overline{g}$, the same characterization as for $\PGL_\calU(q)$ above holds by the same argument. Again we need to restrict the tuples $q=(q_\chi)_{\chi\text{ primary}}$ so that $q_\chi=q_{\chi^\ast}$ for all $\chi$ primary.
	
	However, we conjecture that the above characterization for $G$ of type $\Sp_\calU(q)$, $\GO_\calU(q)$ or $\GU_\calU(q)$ is false if 
	$$
	\sum_{\chi\text{ primary}}{q_\chi(g)}<1
	$$
	for an element $g\in G$.
\end{remark}

\begin{remark}\label{rmk:comm_*_Z}
	It is easy to see that $(\chi.z)^\ast=\chi^\ast.z$ for $z\in Z$. Indeed, since $z^\sigma=z^{-1}$ for $z\in Z$, we have 
	\begin{align*}
	(\chi.z)^\ast&=(z^{-k}\chi(zX))^\ast=(z^k)^\sigma a_0^{-\sigma} X^k(z^{-k})^\sigma\chi^\sigma(z^\sigma X^{-1})\\
	&=a_0^{-\sigma}X^k\chi^\sigma((zX)^{-1})=z^{-k}\chi^\ast(zX)=\chi^\ast.z,
	\end{align*}
	where $\chi=a_0+a_1X+\cdots+a_{k-1}X^{k-1}+X^k$ and $\sigma\in\Aut(k)$ is defined as above in Remark~\ref{rmk:oth_grps}.
\end{remark}

\section{Characterization of torsion elements in $\Sgrp_\calU$, $\GL_\calU(q)$, and $\PGL_\calU(q)$}

In this section, we characterize torsion elements in metric ultraproducts of the above type.
At first note that an invertible element in $\M_\calU(q)$, i.e., an element of $\GL_\calU(q)$, is algebraic over $k=\finfield_q$ if and only if it is torsion. Indeed, if $g$ is torsion, then $g^o-1=0$ for some integer $o\geq 1$. Conversely, if $g$ is algebraic and invertible, let $\chi\in k[X]$ be the its minimal polynomial. Setting $o\coloneqq\card{(k[X]/(\chi))^\times}<\infty$ one sees that $g^o=1$ as $g$ is invertible.

Here comes the promised characterization of torsion elements.

\begin{lemma}\label{lem:char_tor_el}
	An element $\sigma\in\Sgrp_\calU$ resp.\ $g\in\GL_\calU(q)$ is torsion if and only if there is $N\in\ints_+$ such that 
	$$
	\sum_{k=1}^N{q_k(\sigma)}=1\text{ resp.\ }\sum_{\substack{\chi\text{ primary}\\ \deg(\chi)\leq N}}{q_\chi(g)}=1.
	$$	
	An element $\overline{g}\in\PGL_\calU(q)$ is torsion if and only if any lift $g\in\GL_\calU(q)$ is torsion.
\end{lemma}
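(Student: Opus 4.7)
The plan splits naturally into the three cases stated, in order of increasing technicality.

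\textbf{Symmetric case.} My starting point is that a point is nonfixed by $\sigma_i^o$ precisely when it lies on a cycle of $\sigma_i$ whose length does not divide $o$. Hence
$$\ell_{\rm H}(\sigma^o)=\lim_\calU{\frac{1}{n_i}\sum_{k\nmid o}{n_k(\sigma_i)}}=\sum_{k\nmid o}{q_k(\sigma)},$$
so $\sigma^o=1$ in $\Sgrp_\calU$ iff $\sum_{k\mid o}{q_k(\sigma)}=1$. If $\sigma$ is torsion of order $o$, take $N\coloneqq o$; conversely, if $\sum_{k=1}^N{q_k(\sigma)}=1$, I set $o\coloneqq\lcm(1,\ldots,N)$, and since every $k\leq N$ then divides $o$ this forces $\sigma^o=1$.

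\textbf{Linear case.} My plan is to apply the identity $r_\xi(g)=\sum_{\chi\text{ primary}}{\frac{\deg(\gcd\set{\chi,\xi})}{\deg(\chi)}q_\chi(g)}$ from the previous section to $\xi=X^o-1$. Since $g^o=1$ in $\GL_\calU(q)$ unravels to $\dim\ker(g_i^o-1)/n_i\to_\calU 1$, i.e., $r_{X^o-1}(g)=1$, and since each coefficient is at most $1$ with equality exactly when $\chi\mid X^o-1$, the identity combined with $\sum_\chi{q_\chi(g)}\leq 1$ makes $g^o=1$ equivalent to $\sum_{\chi\mid X^o-1}{q_\chi(g)}=1$. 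The forward direction then gives $N\coloneqq o$ at once. For the converse, finiteness of $\finfield_q$ guarantees that only finitely many primary $\chi_1=p_1^{e_1},\ldots,\chi_m=p_m^{e_m}$ of degree $\leq N$ contribute, and each $p_j$ is coprime to $X$ because $q_{X^e}(g)=0$ for $g\in\GL_\calU(q)$. I take $o'$ to be the lcm of the multiplicative orders of all roots of the $p_j$ in $\overline{\finfield_q}^\times$ and pick $s$ with $\ell^s\geq\max_j{e_j}$, where $\ell=\mathrm{char}(\finfield_q)$; the Frobenius identity $(X^{o'}-1)^{\ell^s}=X^{o'\ell^s}-1$ then implies $\chi_j\mid X^{o'\ell^s}-1$ for all $j$, so $o\coloneqq o'\ell^s$ is the sought exponent.

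\textbf{Projective case and main obstacle.} Here I would use that $Z\leq k^\times=\finfield_q^\times$ (as in Remark~\ref{rmk:oth_grps}; for type $\GL$ the scalar embedding exhausts the center as in the finite-dimensional case), so $Z$ is finite of exponent dividing $q-1$. If a lift $g$ is torsion then clearly so is $\overline{g}$; conversely, if $\overline{g}^o=\overline{1}$ then $g^o\in Z$, whence $g^{o(q-1)}=1$. Since any two lifts differ by a central torsion element, torsionness of one lift propagates to all. The only genuine technical step is the construction of the common exponent $o$ in the linear case, where the ``semisimple'' contribution (root orders of the $p_j$) and the ``unipotent'' contribution (multiplicities $e_j$ absorbed via the Frobenius identity) must be handled separately; the preliminary reduction to a finite list of primary polynomials relies essentially on finiteness of $\finfield_q$. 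Everything else is routine bookkeeping with the normalized rank.
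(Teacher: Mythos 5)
Your strategy matches the paper's: characterize $\sigma^o=1$ (resp.\ $g^o=1$) via the concentration of the $q_k$ (resp.\ $q_\chi$) data, produce a common exponent by an lcm construction, and then handle $\PGL_\calU(q)$ via the finiteness of the kernel $k^\times$ of $\GL_\calU(q)\to\PGL_\calU(q)$. Your construction of $o=o'\ell^s$ in the linear case is a concrete unpacking of the paper's cleaner choice $o=\lcm\set{\card{(k[X]/(\chi))^\times}}[\deg\chi\leq N]$, and both work. One flaw needs fixing in your symmetric case: the middle equality in
$$
\ell_{\rm H}(\sigma^o)=\lim_\calU{\frac{1}{n_i}\sum_{k\nmid o}{n_k(\sigma_i)}}=\sum_{k\nmid o}{q_k(\sigma)}
$$
illegitimately swaps the $\calU$-limit with an infinite sum; in general the right-hand side is only $\leq$ the left (mass can escape to cycle lengths $\to\infty$, i.e.\ $\sum_k q_k(\sigma)<1$ is possible). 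As written, $\sigma^o=1$ gives you only $\sum_{k\nmid o}q_k(\sigma)=0$, which does \emph{not} imply $\sum_{k\mid o}q_k(\sigma)=1$ without already knowing $\sum_k q_k(\sigma)=1$ — the very thing one is trying to establish. The correct identity is
$$
\ell_{\rm H}(\sigma^o)=1-\lim_\calU{\frac{1}{n_i}\sum_{k\mid o}{n_k(\sigma_i)}}=1-\sum_{k\mid o}{q_k(\sigma)},
$$
where the swap is legitimate because $\set{k}[k\mid o]$ is finite; from this your claimed equivalence $\sigma^o=1\iff\sum_{k\mid o}q_k(\sigma)=1$ follows at once. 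Your linear case avoids this pitfall because you argue via $r_{X^o-1}(g)=1$ together with $\sum_\chi q_\chi(g)\leq 1$, which is exactly the correct version of the argument.
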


\begin{proof}
	Write $\lcm S$ for the least common multiple of the elements of $S$. Indeed, if the above two conditions are fulfilled, then writing $o\coloneqq\lcm\set{1,\ldots,N}$ resp.\ $o\coloneqq\lcm\set{\card{(k[X]/(\chi))^\times}}[\chi\text{ primary, }\deg(\chi)\leq N]$, we have $\ell_{\rm H}(\sigma_i^o)\to_\calU 0$ resp.\ $\ell_{\rm rk}(g_i^o)\to_\calU 0$ meaning that $\sigma^o=1$ resp.\ $g^o=1$. Conversely, if we assume $\sigma^o=1$ resp.\ $g^o=1$, we get that $\ell_{\rm H}(\sigma_i^o)\to_\calU 0$ resp.\ $\ell_{\rm rk}(g_i^o)\to_\calU 0$, meaning that, asymptotically, the $d$-cycles in $\sigma_i$ for $d\mid o$ support the whole set resp.\ all Frobenius blocks $F(\chi)$ for $\chi\mid X^o-1$ primary support the whole vector space, so taking $N\coloneqq o$ above, we get the converse direction.
	
	The last statement follows, since the kernel of the surjective homomorphism $\GL_\calU(q)\to\PGL_\calU(q)$ equals $k^\times=\finfield_q^\times$, which is finite. Hence, if $g\in\GL_\calU(q)$ represents $\overline{g}\in\PGL_\calU(q)$ and the latter is of order $o<\infty$, we have that $\ord(g)\mid o(q-1)<\infty$.
\end{proof}

\section[Faithful action of $\Sgrp_\calU$ and $\PGL_\calU(q)$]{Faithful action of $\Sgrp_\calU$ and $\PGL_\calU(q)$ on the Loeb space and the associated continuous geometry}\label{sec:ffl_act_met_ups_Ln_sp}

In this section, we show that the groups $\Sgrp_\calU$ and $\PGL_\calU(q)$ faithfully act on natural associated objects.
For this purpose we need the so-called \emph{Loeb space}
$$
L(n_i)_{i\in I}\coloneqq\left(\calS,\mu\right)
$$
resp.\ its vector space analog, the \emph{continuous geometry}
$$
V(n_i)_{i\in I}\coloneqq\left(\calV,\dim\right),
$$
which are associated naturally to the metric ultraproduct $\Sgrp_\calU$ resp.\ $\PGL_\calU(q)$.

Here $\calS$ resp.\ $\calV$ equals $\prod_{i\in I}{\powset(\underline{n_i})}$ resp.\ $\prod_{i\in I}{\Sub(k^{n_i})}$ modulo the equivalence relation $(S_i)_{i\in I}\sim(T_i)_{i\in I}$ resp.\ $(U_i)_{i\in I}\sim(V_i)_{i\in I}$ iff $\mu_i(S_i\triangle T_i)\to_\calU 0$ resp.\ $\dim_i(U_i+V_i)-\dim_i(U_i\cap V_i)\to_\calU 0$, where $\mu_i$ resp.\ $\dim_i$ is the normalized counting measure resp.\ dimension on $\underline{n_i}$ resp.\ $k^{n_i}$ (and $A\triangle B$ denotes the symmetric difference of the sets $A$ and $B$).
Then one defines $\mu$ resp.\ $\dim$ by 
$$
\mu(S)=\mu(\overline{(S_i)}_{i\in I})\coloneqq\lim_\calU{\mu_i(S_i)}
$$ 
and 
$$
\dim(V)=\dim(\overline{(V_i)}_{i\in I})\coloneqq\lim_\calU{\dim_i(V_i)}.
$$
It is easy to check that both are well-defined in this way. Also the operations $\cup,\cap$ resp.\ $+,\cap$ are inherited to $\calS$ resp.\ $\calV$ in a natural way, e.g., $\overline{(S_i)}_{i\in I}\cap\overline{(T_i)}_{i\in I}\coloneqq\overline{(S_i\cap T_i)}_{i\in I}$. Write $S\subseteq T$ resp.\ $U\leq V$ iff $\mu(S\cap T)=\mu(S)$ resp.\ $\dim(U\cap V)=\dim(U)$. Call a permutation of $\calS$ resp.\ $\calV$ an automorphism iff it preserves $\mu$ resp.\ $\dim$ and the relation $\subseteq$ resp.\ $\leq$.

Then one observes that $\Sgrp_\calU$ resp.\ $\PGL_\calU(q)$ is faithfully represented as group of automorphisms of $(\calS,\mu)$ resp.\ $(\calV,\dim)$.

At first we consider the case $\overline{G}=\Sgrp_\calU$.
Indeed, assume for fixed $\sigma=\overline{(\sigma_i)}_{i\in I}\in\Sgrp_\calU$ that $S.\sigma=S$ for all $S\in\calS$. Then take $S=\overline{(S_i)}_{i\in I}$, where $S_i\subseteq\underline{n_i}$ is taken in the following way. For each $k$-cycle $c\subseteq\underline{n_i}$ ($k>1$; here seen as a set) we pick $s_c\in c$ and define $S_i$ by $S_i\cap c=\set{s_c,s_c.\sigma_i^2,\ldots,s_c.\sigma_i^{2(\floor{k/2}-1)}}$ and $S_i\cap\Omega_1(\sigma_i)=\emptyset$. Then $S_i\triangle S_i.\sigma_i=\emptyset$ and $\mu_i(S_i)\geq 1/3\card{\supp(\sigma_i)}$. This means that $S$ is fixed by $\sigma$ if and only if $\supp(\sigma)\coloneqq\overline{(\supp(\sigma_i))}_{i\in I}=\overline{(\emptyset)}$ has measure zero. But this means $\sigma=\id$ in the metric ultraproduct $\Sgrp_\calU$.

Now consider the case $\overline{G}=\PGL_\calU(q)$.
Here, similarly, assume for fixed $g=\overline{(g_i)}_{i\in I}\in\GL_\calU(q)$ that $V.g=V$ for all $V\in\calV$. Then take $V=\overline{(V_i)}_{i\in I}$ in the following way: The linear transformation $g_i$ is a direct sum of Frobenius blocks $F(\chi)$, where $\chi\in k[X]$ runs through all (monic) primary polynomials. For each such block $b\leq k_i^{n_i}$ of dimension $k_b>1$ (here seen as a subspace) of $g_i$ select a cyclic vector $v_b$. Then define $V_i$ by $V_i=\bigoplus_{b,k_b>1}{\gensubsp{v_b,v_b.g_i^2,\ldots,v_b.g_i^{2(\floor{k_b/2}-1)}}}$. Then one observes that $V_i\cap V_i.g_i=0$, so that $\dim_i(V_i+V_i.g_i)-\dim_i(V_i\cap V_i.g_i)=2\dim_i(V_i)$.
This shows that $q_\chi(g)=0$ for all $\chi\in k[X]$ primary of degree $k_\chi>1$. But one observes that, if $q_{(X-\lambda)}(g),q_{(X-\mu)}(g)\geq\varepsilon>0$ for $\lambda\neq\mu$ elements of $k$, we can use the following construction:
Let $e_{i1},\ldots,e_{ik_i}\in V_{X-\lambda}(g_i)$ and $f_{i1},\ldots,f_{ik_i}\in V_{X-\mu}(g_i)$ such that $\lim_\calU{k_i/n_i}=\varepsilon$. Define $V_i\coloneqq\gensubsp{e_{ij}+f_{ij}}[j=1,\ldots,k_i]$ ($i\in I$).
Assume $v\in V_i\cap V_i.g_i$. Then there exist numbers $\alpha_1,\ldots,\alpha_{k_i},\beta_1,\ldots,\beta_{k_i}\in k$ such that 
$$
v=\sum_{j=1}^{k_i}{\alpha_j(e_{ij}+f_{ij})}=\sum_{j=1}^{k_i}{\beta_j(\lambda e_{ij}+\mu f_{ij})}.
$$
This gives that 
$$
\sum_{j=1}^{k_i}{(\alpha_j-\beta_j\lambda)e_{ij}}=\sum_{j=1}^{k_i}{(\beta_j\mu-\alpha_j)f_{ij}},
$$
so that by disjointness of $V_{X-\lambda}(g_i)$ and $V_{X-\mu}(g_i)$ both sides are zero and so, since the $e_{ij},f_{ij}$ ($j=1,\ldots,k_i$) are linearly independent, we get that $\alpha_j-\beta_j\lambda=\beta_j\mu-\alpha_j=0$, so that, since $\lambda\neq\mu$, we obtain $\alpha_j=\beta_j=0$. Hence $v=0$ and $V_i\cap V_i.g_i=0$. But $\lim_\calU{\dim(V_i)/n_i}\geq\varepsilon$, yielding the same contradiction as above.
Therefore we must have $g=\lambda\id$ (for $\lambda\in\finfield_q$; as $k=\finfield_q$ is finite) in the metric ultraproduct $\GL_\calU(q)$, i.e., $\PGL_\calU(q)$ is faithfully represented.

\begin{remark}
	The above statement about $\PGL_\calU(q)$ holds for any such metric ultraproduct of groups $\PGL_{n_i}(k_i)$ where the fields $k_i$ are not restricted with the same proof. Here the kernel of the action $\GL_\calU\to\Aut(\calV,\dim)$ is given by $\prod_\calU{k_i^\times}$ (the algebraic ultraproduct of these groups).
\end{remark}

\begin{remark}\label{rmk:rest_subsp_met_up}
	Hence, if the sequence of subsets $(S_i)_{i\in I}$ resp.\ subspaces $(V_i)_{i\in I}$ is almost stabilized by each element of a subgroup $H$ of $\overline{G}=\Sgrp_\calU$ resp.\ $\overline{G}=\PGL_\calU(q)$ (or of $G=\GL_\calU(q)$), we can restrict $H$ to $S\coloneqq\overline{(S_i)}_{i\in I}$ resp.\ $V\coloneqq\overline{(V_i)}_{i\in I}$.
\end{remark}

\begin{remark}\label{rmk:def_subsp_up}
	For an element $\sigma=\overline{(\sigma_i)}_{i\in I}$ set $\Omega_k(\sigma)\coloneqq\overline{(\Omega_k(\sigma_i))}_{i\in I}\in\calS$ for $k\in\ints_+$. Similarly, for a \emph{semisimple} element $g=\overline{(g_i)}_{i\in I}\in\GL_\calU(q)$ (see the next section for the definition of semisimple elements) set $V_\chi(g)\coloneqq\overline{(V_\chi(g_i))}_{i\in I}\in\calV$ for $\chi\in k[X]$ irreducible. Note that these definitions are independent of the chosen representatives (for the uniqueness of $V_\chi(g)$ we need that $g$ is semisimple, since then $V_\chi(g_i)=\ker(\chi(g_i))$ for a suitable representative $(g_i)_{i\in I}$ of $g$ and $\chi\in k[X]$ irreducible).
\end{remark}

\begin{remark}\label{rmk:bases}
	Call a sequence of subsets $(B_i)_{i\in I}\subseteq k^{n_i}$ a basis of $V\in\calV$ if there is a representative $(V_i)_{i\in I}$ of $V$ such that $B_i$ is a basis of $V_i$ ($i\in I$).
\end{remark}

\begin{remark}\label{rmk:tot_sing}
	Call $V\in\calV$ totally singular if it has a representative $(V_i)_{i\in I}$ such that each $V_i$ is totally singular ($i\in I$).
\end{remark}

\section{Centralizers in $\Sgrp_\calU$, $\GL_\calU(q)$, $\Sp_\calU(q)$, $\GO_\calU(q)$, and $\GU_\calU(q)$}

In this section, we provide tools (Lemmas~\ref{lem:cond_cent_up_cents1} and~\ref{lem:cond_cent_up_cents2}) to compute centralizers of certain elements from the metric ultraproducts $\Sgrp_\calU$ and $\GL_\calU(q)$. We will use this in Section~\ref{sec:cent_in_PGL_U} to compute centralizers of elements in $\PGL_\calU(q)$. We write $\C(g)$ for the \emph{centralizer} of the group element $g$.

\vspace{3mm}

\emph{Centralizers of elements in $G=\Sgrp_\calU$, $\GL_\calU(q)$.} Note that for $\sigma=\overline{(\sigma_i)}_{i\in I}\in\Sgrp_\calU$ resp.\ $g=\overline{(g_i)}_{i\in I}\in\GL_\calU(q)$ we have $\prod_\calU{\C(\sigma_i)}\leq \C(\sigma)$ resp.\ $\prod_\calU{\C(g_i)}\leq \C(g)$ (subsequently, by this notation we mean the metric ultraproduct of subgroups of the $H_i$ ($i\in I$)). In the following lemma, we characterize when the above inclusion is actually an equality in the case of $\Sgrp_\calU$.

\begin{lemma}\label{lem:cond_cent_up_cents1}
	An element $\sigma\in\Sgrp_\calU$ satisfies $\sum_{k\in\ints_+}{q_k(\sigma)}=1$ if and only if for each choice of a representative $(\sigma_i)_{i\in I}$ of $\sigma$, the centralizer $\C(\sigma)$ equals $\prod_\calU{\C(\sigma_i)}$.
\end{lemma}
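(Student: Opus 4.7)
The inclusion $\prod_\calU\C(\sigma_i)\leq\C(\sigma)$ is immediate for any choice of representative, since pointwise commutation survives passage to the metric ultraproduct. So the lemma reduces to deciding when this inclusion is an equality.

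\emph{Forward direction.} Assume $\sum_k q_k(\sigma)=1$, fix a representative $(\sigma_i)$ of $\sigma$, and let $\tau\in\C(\sigma)$ be represented by some $(\tau_i)$. Set $B_i\coloneqq\set{x\in\underline{n_i}}[x.\tau_i\sigma_i\neq x.\sigma_i\tau_i]$, so that $\card{B_i}/n_i\to_\calU 0$. Given $\varepsilon>0$, choose $N$ with $\sum_{k\leq N}q_k(\sigma)>1-\varepsilon$, which forces $\card{\Omega_{\leq N}(\sigma_i)}/n_i>1-2\varepsilon$ for $\calU$-almost all $i$. Call a cycle $c$ of $\sigma_i$ of length $\leq N$ \emph{good} if $c\cap B_i=\emptyset$; on such $c$ the map $\tau_i|_c$ is automatically a $\sigma_i$-equivariant bijection onto another $\sigma_i$-cycle of length $\lvert c\rvert$. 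Each bad cycle of length $\leq N$ meets $B_i$, so at most $N\card{B_i}$ points lie in bad cycles of length $\leq N$, and the union $G_i$ of good cycles of length $\leq N$ satisfies $\card{\underline{n_i}\setminus G_i}/n_i\leq 2\varepsilon+N\card{B_i}/n_i\leq 3\varepsilon$ for $\calU$-almost all $i$. Since $\tau_i(G_i)$ is a union of $\sigma_i$-cycles with the same length multiset as $G_i$, the complements $\underline{n_i}\setminus G_i$ and $\underline{n_i}\setminus\tau_i(G_i)$ are isomorphic as $\sigma_i$-sets. Extending $\tau_i|_{G_i}$ by any equivariant bijection between those complements gives some $\tau_i'\in\C(\sigma_i)$ with $d_{\rm H}(\tau_i,\tau_i')\leq 3\varepsilon$. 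A standard diagonal argument over $\calU$, applied to $\varepsilon=1/n$, produces a single representative $(\tau_i'')$ with $\tau_i''\in\C(\sigma_i)$ and $d_{\rm H}(\tau_i,\tau_i'')\to_\calU 0$, giving $\tau\in\prod_\calU\C(\sigma_i)$.

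\emph{Backward direction (contrapositive).} Suppose $\delta\coloneqq 1-\sum_k q_k(\sigma)>0$, and fix any representative $(\sigma_i)$ of $\sigma$. Since $\card{\Omega_{>N}(\sigma_i)}/n_i\to_\calU 1-\sum_{k\leq N}q_k(\sigma)\geq\delta$ for each fixed $N$, a diagonal choice yields $N_i\to_\calU\infty$ with $\card{\Omega_{>N_i}(\sigma_i)}/n_i\geq\delta/2$ for $\calU$-almost all $i$. On each cycle $c$ of $\sigma_i$ of length $k_c>N_i$, pick a base point $x_0^c$ and split $c$ into three consecutive arcs $A_c^1,A_c^2,A_c^3$ (along the $\sigma_i$-order) with $\card{A_c^2}=\card{A_c^3}=\lfloor k_c/3\rfloor$. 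Define $\tau_i$ on $c$ to be the identity on $A_c^1$, the shift $x\mapsto x.\sigma_i^{\lfloor k_c/3\rfloor}$ on $A_c^2$ (mapping $A_c^2$ bijectively onto $A_c^3$), and its inverse on $A_c^3$; off the long cycles, set $\tau_i=\mathrm{id}$. The commutator $[\tau_i,\sigma_i]$ fails only at the three arc-boundaries within each long cycle, so its support has size $\leq 3n_i/N_i=o_\calU(n_i)$, and hence $\tau\coloneqq\overline{(\tau_i)}\in\C(\sigma)$. Now suppose $\tau=\overline{(\tau_i''')}$ for some $\tau_i'''\in\C(\sigma_i)$. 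On each long cycle $c$, $\tau_i'''|_c$ is either a single shift $\sigma_i^{s_c}|_c$ (when $\tau_i'''(c)=c$) or it maps $c$ onto a different cycle (in which case $\tau_i'''$ and $\tau_i$ disagree on all of $c$). In the first case, $s_c$ can match at most one of the three distinct shifts $0,\lfloor k_c/3\rfloor,-\lfloor k_c/3\rfloor$ used by $\tau_i$ on the arcs, so $\tau_i'''|_c$ agrees with $\tau_i|_c$ on a single arc of size at most about $k_c/3$. Summing over the long cycles, $d_{\rm H}(\tau_i,\tau_i''')\geq\tfrac{2}{3}\card{\Omega_{>N_i}(\sigma_i)}/n_i-o_\calU(1)\geq\delta/3-o_\calU(1)$, contradicting $d_{\rm H}(\tau_i,\tau_i''')\to_\calU 0$.

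The only delicate step is the extension in the forward direction: the partial equivariant map $\tau_i|_{G_i}$ completes to a member of $\C(\sigma_i)$ precisely because $\tau_i(G_i)$ is automatically a union of $\sigma_i$-cycles whose length statistics agree with those of $G_i$, so that the $\sigma_i$-set complements are equivariantly isomorphic. The rest is bookkeeping together with the usual ultrafilter diagonal trick.
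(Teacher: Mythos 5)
Your proof is correct, and the forward direction follows essentially the same lines as the paper's argument (identifying the bad cycles via the small support of $[\sigma_i,\tau_i]$, keeping $\tau_i$ on good short cycles, and re-completing to an element of $\C(\sigma_i)$). You are somewhat more careful than the paper in observing that $\tau_i(G_i)$ has the same $\sigma_i$-cycle length multiset as $G_i$, so that the complements are equivariantly isomorphic and the extension to a genuine element of $\C(\sigma_i)$ is possible.

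Your backward direction, however, is a genuinely different argument. The paper, starting from $\sum_k q_k(\sigma)<1$, produces \emph{two} representatives $(\sigma_i')$ and $(\sigma_i'')$ of $\sigma$ whose long cycles are regrouped differently (one big $K_i$-cycle versus two $\lfloor K_i/2\rfloor$-cycles), and then shows that $\prod_\calU\C(\sigma_i')$ is abelian on that part while $\prod_\calU\C(\sigma_i'')$ is not; hence the two cannot both equal $\C(\sigma)$, and at least one representative witnesses the strict inclusion. Your argument instead fixes an \emph{arbitrary} representative $(\sigma_i)$ and directly exhibits a witness $\tau=\overline{(\tau_i)}_{i\in I}\in\C(\sigma)\setminus\prod_\calU\C(\sigma_i)$, built by a three-arc swap inside each long cycle; the key point is that any pointwise centralizer element restricted to a long cycle is a single shift or maps the cycle off itself, and can therefore agree with your piecewise-shift $\tau_i$ on at most one of the three arcs. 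This is more elementary (no appeal to the nonabelianness of an ultraproduct of wreath products) and proves the formally stronger statement that the inclusion $\prod_\calU\C(\sigma_i)\lneq\C(\sigma)$ holds for \emph{every} representative, not merely for some. Both approaches are valid; yours is arguably the cleaner of the two for this half of the lemma.
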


Before proving Lemma~\ref{lem:cond_cent_up_cents1}, we turn to $\GL_\calU(q)$. An element $g\in\GL_\calU(q)$ is called \emph{semisimple} if it has a representative $(g_i)_{i\in I}$ such that each $g_i\in\GL_{n_i}(q)$ is semisimple, i.e., of order prime to $q$.

\begin{lemma}\label{lem:cond_cent_up_cents2}
	A semisimple element $g\in\GL_\calU(q)$ satisfies $\sum_{\chi\text{ primary}}{q_\chi(g)}=1$ if and only if for each choice of a representative $(g_i)_{i\in I}$ of $g$ where each $g_i$ is semisimple ($i\in I$), the centralizer $\C(g)$ equals $\prod_\calU{\C(g_i)}$.
\end{lemma}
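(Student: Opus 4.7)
The plan is to follow the same strategy as Lemma~\ref{lem:cond_cent_up_cents1}, replacing cycles of $\sigma_i$ by the primary components $V_\chi(g_i)=\ker\chi(g_i)$ of the semisimple element $g_i$ and cycle length by $\deg\chi$. The inclusion $\prod_\calU\C(g_i)\leq\C(g)$ is immediate, so the task is to establish the reverse inclusion under the hypothesis $\sum_\chi q_\chi(g)=1$ and to construct a counterexample without it.

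For the implication $(\Rightarrow)$, I fix $h\in\C(g)$ with representative $(h_i)_{i\in I}$, so that $\ell_\rk([h_i,g_i])\to_\calU 0$. I choose a sequence $N_i\to_\calU\infty$ growing slowly enough that $\dim U_i/n_i\to_\calU 1$ for $U_i\coloneqq\bigoplus_{\deg\chi\leq N_i}V_\chi(g_i)$, which is possible by the hypothesis. The telescoping bound~\eqref{eq:wll_def_r_chi} gives $\ell_\rk([h_i,\chi(g_i)])\leq\binom{\deg\chi+1}{2}\ell_\rk([h_i,g_i])$ for each primary $\chi$; since $V_\chi(g_i)=\ker\chi(g_i)$ for semisimple $g_i$, this controls the codimension of $V_\chi(g_i)\cap h_i^{-1}V_\chi(g_i)$ in $V_\chi(g_i)$ by $\rk([h_i,\chi(g_i)])$, so after modifying $h_i$ in $o(n_i)$ rank I may assume $h_i$ preserves the decomposition $U_i=\bigoplus_{\deg\chi\leq N_i}V_\chi(g_i)$. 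Within each such block $V_\chi(g_i)\cong\finfield_{q^{\deg\chi}}^{c_\chi(g_i)}$, the operator $g_i|_{V_\chi(g_i)}$ acts as scalar multiplication by a root $\alpha$ of $\chi$ in the $\finfield_{q^{\deg\chi}}$-module structure; using the Galois decomposition $\mathrm{End}_{\finfield_q}(V_\chi(g_i))=\bigoplus_{\tau\in\Gal(\finfield_{q^{\deg\chi}}/\finfield_q)}\M_{c_\chi(g_i)}(\finfield_{q^{\deg\chi}})\tau$, with $\C(g_i|_{V_\chi(g_i)})$ equal to the identity-Galois summand, I write $h_i|_{V_\chi(g_i)}=\sum_\tau h_{i,\chi,\tau}\tau$ and exploit $[h_i,g_i]|_{V_\chi(g_i)}=\sum_{\tau\neq\id}(\alpha^\tau-\alpha)h_{i,\chi,\tau}\tau$ to bound $\ell_\rk(h_i|_{V_\chi(g_i)}-h_{i,\chi,\id})$ by $\deg\chi\cdot\ell_\rk([h_i,g_i]|_{V_\chi(g_i)})$. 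Defining $h_i'$ to equal $h_{i,\chi,\id}$ on each $V_\chi(g_i)$ with $\deg\chi\leq N_i$ and the identity on $\bigoplus_{\deg\chi>N_i}V_\chi(g_i)$ (with a small-rank perturbation for invertibility) yields $h_i'\in\C(g_i)$ with $\ell_\rk(h_i-h_i')\to_\calU 0$.

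For the converse, I assume $\sum_\chi q_\chi(g)<1-\delta$ for some $\delta>0$, so a positive proportion of dimension lies in Frobenius blocks with primary polynomials of degrees tending to $\infty$ along $\calU$. After possibly modifying the representative on an $o(n_i)$-rank subspace, two distinct primary polynomials $\chi_{i,1}\neq\chi_{i,2}$ of such large degree each contribute at least $\delta n_i/3$ dimensions to $g_i$. I define $h_i$ to send a cyclic summand of $V_{\chi_{i,1}}(g_i)\cong\finfield_{q^{\deg\chi_{i,1}}}$ injectively into $V_{\chi_{i,2}}(g_i)$ by the $\finfield_q$-linear map $\alpha_1^j\mapsto\alpha_2^j$ ($0\leq j<\deg\chi_{i,1}$), to act symmetrically back from $V_{\chi_{i,2}}(g_i)$, and to be the identity elsewhere; using the minimal-polynomial relation $\chi_{i,1}(\alpha_1)=0$ a direct computation shows $\rk([h_i,g_i])$ is bounded by a constant per block, so $\ell_\rk([h_i,g_i])\to_\calU 0$ and $h\in\C(g)$. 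Yet every element of $\C(g_i)$ preserves each primary component, so since $h_i$ sends $V_{\chi_{i,1}}(g_i)$ injectively into $V_{\chi_{i,2}}(g_i)$, a straightforward computation yields $\rk(h_i-h_i')\geq 2\delta n_i/3$ for every $h_i'\in\C(g_i)$, hence $h\notin\prod_\calU\C(g_i)$.

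The main obstacle will be the rate-matching in the forward direction: one must grow $N_i\to_\calU\infty$ slowly enough relative to $\ell_\rk([h_i,g_i])$ that the total error from the block-preservation step (bounded by $\binom{N_i+1}{2}$ times the number of primary polynomials of degree $\leq N_i$, which is $O(q^{N_i})$) and from the per-block Galois projection (bounded by $N_i\cdot\ell_\rk([h_i,g_i])$) remains $o(1)$ along $\calU$, while simultaneously $\dim U_i/n_i\to_\calU 1$. The hypothesis $\sum_\chi q_\chi(g)=1$ is precisely what permits such a diagonal choice.
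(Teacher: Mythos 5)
Your proposal follows the same overall strategy as the paper (choose $N_i\to_\calU\infty$ slowly, modify $h_i$ into a true centralizer element in the forward direction; construct a suitable representative and exhibit a commutator obstruction in the converse), but the key technical steps differ in both directions, and the argument is essentially sound with one quantitative imprecision.

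In the forward direction, after first block-diagonalizing $h_i$ with respect to the primary decomposition (using the telescoping estimate on $[h_i,\chi(g_i)]$, which is correct and which the paper does not need to do explicitly), you project $h_i|_{V_\chi(g_i)}$ onto its $K_\chi$-linear summand via the Galois decomposition $\mathrm{End}_{\finfield_q}(V_\chi)\cong\bigoplus_\tau\M_{c_\chi}(K_\chi)\tau$. The paper instead avoids this decomposition entirely: it takes $U_i=\ker([g_i,h_i]-\id)$ and invokes Lemma~\ref{lem:aux_lrg_inv_subsp}(ii) to find a large $g_i$-invariant subspace $W_{i\chi}\leq U_i\cap V_\chi(g_i)$ on which $h_i$ is already $K_\chi$-linear, then extends. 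Both routes yield a valid $h_i'\in\C(g_i)$ with $d_\rk(h_i,h_i')\to_\calU 0$. However, your claimed bound $\ell_\rk(h_i|_{V_\chi}-h_{i,\chi,\id})\leq\deg\chi\cdot\ell_\rk([h_i,g_i]|_{V_\chi})$ is too strong as stated: solving for the non-identity Galois components from the system $[h_i,g_i^j]=\sum_{\tau\neq\id}((\alpha^\tau)^j-\alpha^j)h_{i,\chi,\tau}$ (with $\rk([h_i,g_i^j])\leq j\,\rk([h_i,g_i])$ by telescoping) gives a bound of order $(\deg\chi)^2$ rather than $\deg\chi$. This does not affect the conclusion, since $N_i$ can be chosen to grow arbitrarily slowly, but the stated linear bound should not be asserted without justification. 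The paper's invariant-subspace lemma gives the cleaner linear-in-$\deg\chi$ bound directly.

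In the converse direction you are more explicit than the paper: you build a single representative with two large irreducible Frobenius blocks and produce a concrete witness $h\in\C(g)\setminus\prod_\calU\C(g_i)$ (a block-swap of bounded commutator-rank with $g_i$). The paper instead constructs two representatives $g_i'$ and $g_i''$ of $g$ and argues that the restricted ultraproducts of their centralizers are non-isomorphic (abelian vs.\ non-abelian wreath-like), so at least one must differ from $\C(g)$. Your version buys an explicit obstruction element and so is somewhat more transparent; the paper's buys avoidance of an explicit rank computation. Both are correct, and the $o(n_i)$-rank cost of merging large-degree blocks into one or two large irreducible blocks — which both approaches rely on — is indeed justified by the fact that merging two companion blocks changes the matrix by rank $\leq 2$.
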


To prove Lemmas~\ref{lem:cond_cent_up_cents1} and~\ref{lem:cond_cent_up_cents2}, we need the following auxiliary result.

\begin{lemma}\label{lem:aux_lrg_inv_subsp}
	The following are true:
	\begin{enumerate}[(i)]
		\item Assume $\sigma\in\Sym(\underline{n})$ is of order $k$ and $S\subseteq\underline{n}$ has normalized counting measure $\mu(S)$. Then $S$ contains a $\sigma$-invariant subset $T$ of measure $\mu(T)\geq 1-k(1-\mu(S))$.
		\item Assume $g\in\GL(V)$ for a $k$-vector space $V$ and that the minimal polynomial of $g$ over $k$ has degree $k$. Assume $U\leq V$. Then there exists a $g$-invariant subspace of $U$ of codimension at most $k\codim(U)$.
	\end{enumerate}
\end{lemma}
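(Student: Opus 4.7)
\medskip

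My plan is to give essentially parallel constructions for (i) and (ii): in each case I intersect $S$ (resp.\ $U$) with its images (resp.\ preimages) under the powers of $\sigma$ (resp.\ $g$) up to exponent $k-1$, and bound the loss via a union bound on measure (resp.\ subadditivity of codimension).

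\textbf{Part (i).} Set $T\coloneqq\bigcap_{j=0}^{k-1}{S.\sigma^{-j}}$. Since $\sigma$ has order $k$, applying $\sigma$ to $T$ cyclically permutes the factors of the intersection, so $T.\sigma=T$, i.e., $T$ is $\sigma$-invariant. Also $T\subseteq S$ (take $j=0$). For the measure bound, pass to complements: $\underline{n}\setminus T=\bigcup_{j=0}^{k-1}{(\underline{n}\setminus S).\sigma^{-j}}$, so by subadditivity of $\mu$ together with $\sigma$-invariance of $\mu$, one gets $\mu(\underline{n}\setminus T)\leq k\mu(\underline{n}\setminus S)=k(1-\mu(S))$. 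This yields the required bound on $\mu(T)$.

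\textbf{Part (ii).} Define $W\coloneqq\bigcap_{j=0}^{k-1}{g^{-j}(U)}$, i.e., the set of $v\in V$ such that $v,gv,\dots,g^{k-1}v\in U$. Clearly $W\leq U$. To see $g$-invariance, take $v\in W$. Then $g^{j}(gv)=g^{j+1}v\in U$ for $0\leq j\leq k-2$ directly from $v\in W$; and for $j=k-1$, write the minimal polynomial of $g$ as $X^k+\sum_{i=0}^{k-1}{c_iX^i}$, so that $g^kv=-\sum_{i=0}^{k-1}{c_ig^iv}\in U$ since each $g^iv\in U$. Thus $gv\in W$. For the codimension, since $g\in\GL(V)$ each $g^{-j}$ is a linear automorphism, hence $\codim(g^{-j}(U))=\codim(U)$, and therefore by subadditivity $\codim(W)\leq\sum_{j=0}^{k-1}{\codim(g^{-j}(U))}=k\codim(U)$.

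\textbf{Main difficulty.} Both constructions are straightforward; the one subtle point is the $g$-invariance check in (ii), which is exactly where the hypothesis on the degree of the minimal polynomial enters (it lets us close up under $g^k$ inside $U$ without having to enlarge the intersection beyond $k-1$ powers). For (i), the role of the order $k$ is analogous but even simpler since $\sigma^k=\id$ literally cycles the intersection. No additional machinery from the paper is needed.
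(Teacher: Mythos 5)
Your proof is correct and follows essentially the same approach as the paper: intersect $k$ consecutive translates/preimages, close the intersection under the action using $\sigma^k=\id$ (resp.\ the minimal polynomial relation of degree $k$), and bound the loss by subadditivity. The only cosmetic difference is that in (ii) you iterate in the ``positive'' direction (showing $g^kv\in U$ via the monic relation, which does not even need invertibility of $g$), whereas the paper iterates in the inverse direction (showing $v.g^{-k}\in U$, using $a_0\neq 0$), but the resulting subspace and codimension bound are identical.
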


\begin{proof}
	(i): Observe that the biggest $\sigma$-invariant subset of $S$ is equal to $T=\bigcap_{i\in\ints}{S.\sigma^i}$. But since $\sigma^k=\id$ by assumption, we see that actually $T=\bigcap_{i=0}^{k-1}{S.\sigma^i}$. Hence, since $\mu(S.\sigma^i)=\mu(S)$ for all $i\in\ints$, we have that $\mu(T)\geq 1-k(1-\mu(S))$.
	
	(ii): Similarly to the above, the biggest $g$-invariant subspace contained in $U$ is $W=\bigcap_{i\in\ints}{U.g^i}$. Now $v\in\bigcap_{i=0}^{k-1}{U.g^i}$ means that $v,\ldots,v.g^{-(k-1)}\in U$. But then $v.g^{-k}=-\frac{1}{a_0}(a_1v.g^{-(k-1)}+\cdots+a_{k-1}v.g^{-1}+v)\in U$, where $\chi=a_0+a_1X+\cdots+a_{k-1}X^{k-1}+X^k$ is the minimal polynomial of $g$. Note that $a_0=(-1)^k\det(g)\neq 0$. This shows that actually $W=\bigcap_{i=0}^{k-1}{U.g^i}$, so that $\codim(W)\leq k\codim(U)$.
\end{proof}

\begin{remark}
	The bounds in Lemma~\ref{lem:aux_lrg_inv_subsp} are sharp. E.g., take $\sigma$ of type $(k^{c_k})$ and set $n=c_k k$. Take $S$ of size $n-s$ such that for precisely $s\leq c_k$ $k$-cycles of $\sigma$, $S$ contains $k-1$ elements of each of them and all elements of the remaining $k$-cycles. Then the set $T$ constructed in Lemma~\ref{lem:aux_lrg_inv_subsp} has size $n-ks$. In (ii) we can use a similar construction. 
\end{remark}

Now we are able to prove the Lemmas~\ref{lem:cond_cent_up_cents1} and~\ref{lem:cond_cent_up_cents2}.

\begin{proof}[Proof of Lemmas~\ref{lem:cond_cent_up_cents1} and~\ref{lem:cond_cent_up_cents2}]
	At first we prove Lemma~\ref{lem:cond_cent_up_cents1}. Assume that $\sigma=\overline{(\sigma_i)}_{i\in I}$, $\tau=\overline{(\tau_i)}_{i\in I}\in\Sgrp_\calU$ commute and assume that $\sum_{k=1}^\infty{q_k(\sigma)}=1$.
	Find a sequence $(N_i)_{i\in I}$ tending to infinity along $\calU$ such that 
	$$
	\lim_\calU\sum_{k=1}^{N_i}{q_k(\sigma_i)}=1\text{ and }\binom{N_i+1}{2}\ell_{\rm H}([\sigma_i,\tau_i])\to_\calU 0.
	$$
	Recall that $C_k(\sigma_i)$ denotes the set of $k$-cycles of the permutation $\sigma_i$.
	Call a $k$-cycle of $\sigma_i$ \emph{bad} if it is not mapped $\sigma_i$-equivariantly to another $k$-cycle of $\sigma_i$ by $\tau_i$. Collect the set of bad $k$-cycles of $\sigma_i$ in $C_k'(\sigma_i)$.
	For each bad $k$-cycle of $\sigma_i$ we get at least one non-fixed point of $[\sigma_i,\tau_i]$, so that $\card{C_k'(\sigma_i)}/n_i\leq\ell_{\rm H}([\sigma_i,\tau_i])$ for all $k\in\ints_+$. Hence, if we change $\tau_i$ such that all bad $k$-cycles of $\sigma_i$ are mapped accurately for $k\leq N_i$ and all $k$-cycles for $k>N_i$ are mapped identically, we get a permutation $\tau_i'$ such that 
	\begin{align*}
	d_{\rm H}(\tau_i,\tau_i') &\leq\frac{1}{n_i}\sum_{k=1}^{N_i}{k\card{C_k'(\sigma_i)}}+\sum_{k=N_i+1}^\infty{q_k(\sigma_i)}\\
	&\leq\left(\sum_{k=1}^{N_i}{k}\right)\ell_{\rm H}([\sigma_i,\tau_i])+\sum_{k=N_i+1}^\infty{q_k(\sigma_i)}\\
	&=\binom{N_i+1}{2}\ell_{\rm H}([\sigma_i,\tau_i])+\sum_{k=N_i+1}^\infty{q_k(\sigma_i)}.
	\end{align*}
	By the assumption $\sum_{k=1}^\infty{q_k(\sigma)}=1$, the last term in the above estimate tends to zero along $\calU$. Hence $\tau=\overline{(\tau_i)}_{i\in I}=\overline{(\tau_i')}_{i\in I}$ and $[\sigma_i,\tau_i']=1$.
	
	Conversely, assume that $\sum_{k=1}^\infty{q_k(\sigma)}<1$. Choose the sequence $(N_i)_{i\in I}$ such that $\lim_\calU\sum_{k=1}^{N_i}{q_k(\sigma_i)}=\sum_{k=1}^\infty{q_k(\sigma)}$ and $\lim_\calU{N_i/n_i}=0$.
	
	For each $i\in I$ change $\sigma_i$ to $\sigma_i'$ such that the $k$-cycles of $\sigma_i'$ are the same as in $\sigma_i$ for $1\leq k\leq N_i$ and the other $k$-cycles of $\sigma_i'$ ($k>N_i$; if they exist) are grouped into one big $K_i$-cycle so that $d_{\rm H}(\sigma_i,\sigma_i')$ is minimal possible. It is easy to see that then still $d_{\rm H}(\sigma_i,\sigma_i')\to_\calU 0$ as $N_i\to_\calU\infty$. Now $\sigma_i'$ eventually has precisely one $K_i$-cycle for $K_i>N_i$. Obtain $\sigma_i''$ by dividing this $K_i$-cycle (if it exists) into two $\floor{K_i/2}$-cycles and at most one fixed point so that $d_{\rm H}(\sigma_i',\sigma_i'')\leq 3/n_i$ is minimal. Note that $K_i/n_i=1-\sum_{k=1}^{N_i}{q_k(\sigma_i)}\to_\calU\varepsilon>0$, so that $\floor{K_i/2}>N_i$ along $\calU$, as $\lim_\calU{N_i/n_i}\to_\calU 0$ by assumption.
	
	Now consider the restriction of the centralizers $\C(\sigma_i')$ and $\C(\sigma_i'')$ to the support of the unique $K_i$-cycle of $\sigma_i'$ (which certainly both fix setwise by the previous inequality). The first group is isomorphic to $\Cycgrp_{K_i}$, whereas the second is isomorphic to $\Cycgrp_{\floor{K_i/2}}\wr \Cycgrp_2$. Taking the metric ultraproducts of these groups restricted to this support (in the sense of Remark~\ref{rmk:rest_subsp_met_up}), we get an abelian group in the first case, and a non-abelian group in the second case. Hence, in at least one case, $\prod_\calU{\C(\sigma_i')}\neq\C(\sigma)$ or $\prod_\calU{\C(\sigma_i'')}\neq\C(\sigma)$.
	
	Now we prove Lemma~\ref{lem:cond_cent_up_cents2}. Assume that $g=\overline{(g_i)}_{i\in I},h=\overline{(h_i)}_{i\in I}\in\GL_\calU(q)$ commute, i.e., $[g,h]=\id$, that $g$ and each $g_i$ ($i\in I$) is semisimple, and assume that 
	$$
	\sum_{\chi\text{ irreducible}}^\infty{q_\chi(g)}=1.
	$$
	Note that semisimplicity implies that for each Frobenius block $F(\chi)$ in the generalized Jordan normal form of $g_i$, $\chi=i^1$ is irreducible. Choose the sequence $(N_i)_{i\in I}$ such that
	$$
	\lim_\calU\sum_{\substack{\chi\text{ irreducible}\\ \deg(\chi)\leq N_i}}{q_\chi(g_i)}=1\text{ and }\left(\sum_{\substack{\chi\text{ irreducible}\\ \deg(\chi)\leq N_i}}{\deg(\chi)}\right)\ell_{\rm rk}([g_i,h_i])\to_\calU 0.
	$$
	
	Define $U_i\coloneqq\ker([g_i,h_i]-\id)$. Fix an irreducible polynomial $\chi\in k[X]$ and apply Lemma~\ref{lem:aux_lrg_inv_subsp}(ii) inside $V\coloneqq V_\chi(g_i)$	to the subspace $U\coloneqq U_i\cap V_\chi(g_i)$ to get a $g_i$-invariant subspace $W=W_{i\chi}\leq U$ such that $\codim_V(W_{i\chi})\leq k_\chi\codim(U_i)$, where $k_\chi=\deg(\chi)$. Note here that $V_\chi(g_i)=\ker(\chi(g_i))$ is unique, since $g_i$ is semisimple. This large-dimensional subspace $W_{i\chi}$ is mapped accurately by $h_i$, as $g_i$ commutes with $h_i$ on it. Define $h_i'$ to be equal to $h_i$ on each $W_{i\chi}$ and complete it on each $V_{i\chi}$ to a map commuting with $g_i$ for $\deg(\chi)\leq N_i$ (here we use semisimplicity of $g_i$). On $V_\chi(g_i)$ with $\deg(\chi)>N_i$ set $h_i'$ to be the identity. As in the proof for $\Sgrp_\calU$ above, it follows that $d_{\rm rk}(h_i,h_i')\to_\calU0$ and $[g_i,h_i']=1$.
	
	Conversely, assume that $\sum_{\chi\text{ irreducible}}^\infty{q_\chi(g)}<1$. Choose the sequence $(N_i)_{i\in I}$ such that 
	$$
	\lim_\calU\sum_{\substack{\chi\text{ irreducible}\\ \deg(\chi)\leq N_i}}^{N_i}{q_\chi(g_i)}=\sum_{\chi\text{ irreducible}}^\infty{q_\chi(g)} \text{ and }\lim_\calU{N_i/n_i}=0.
	$$ 
	For each $i\in I$ change $g_i$ into $g_i'$ such that all Frobenius blocks $F(\chi)$ for $\chi$ irreducible of degree at most $N_i$ are left unchanged and all bigger Frobenius blocks (if there is any such block) are grouped into one big Frobenius block $F(\varphi)$ of size $K_i$ (for $\varphi$ irreducible).
	Define $g_i''$ in the same way, but split the Frobenius block $F(\varphi)$ (if it exists) into two or three blocks, two of which are $F(\phi)$ for $\phi$ irreducible of degree $\floor{K_i/2}$ and, if $K_i$ is odd, one block of size one, which is the identity. Then, as above, the centralizer of $g_i'$ restricted to the large Frobenius block $F(\varphi)$ of it, equals $\C(g_i')\cong (k[X]/(\varphi))^\times$, whereas the centralizer $\C(g_i'')$ restricted to the same subspace is non-abelian (again in the sense of Remark~\ref{rmk:rest_subsp_met_up}). Also one sees that their metric ultraproducts are non-isomorphic, similarly to the case of permutations. The proof is complete.
\end{proof}

\begin{remark}\label{rmk:ctrlzer_up_grp_lt}
	If $G$ is one of $\Sp_\calU(q)$, $\GO_\calU(q)$, or $\GU_\calU(q)$ and a semisimple $g\in G$ is represented by $(g_i)_{i\in I}$ and $\sum_{\chi\text{ irreducible}}{q_\chi(g)}=1$, one can adapt the above argument for $\GL_\calU(q)$ to see that still $\C(g)=\prod_\calU{\C(g_i)}$ when all $g_i$ are semisimple.
	
	Indeed, from Subsection~3.4.2~\S1 of \cite{schneider2019phd} it follows that in the space $W_{i\chi}+W_{i\chi^\ast}$ (where $W_{i\chi},W_{i\chi^\ast}$ are constructed as above) we can still find a big, i.e., almost fulldimensional, $g_i$-invariant non-singular subspace $W_{i\chi,\chi^\ast}'$. Then the form $f_i$ (and $Q_i$ in the orthogonal case in characteristic two) on $W_{i\chi,\chi^\ast}'^\perp\cap(V_\chi(g_i)+V_{\chi^\ast}(g_i))$ and $(W_{i\chi,\chi^\ast}'.h_i)^\perp\cap(V_\chi(g_i)+V_{\chi^\ast}(g_i))$ are isomorphic (which again follows from Subsection~3.4.2~\S1 of \cite{schneider2019phd}), so that we can still complete our partial maps to $h_i'$ ($i\in I$). 
\end{remark}

As a consequence of Lemma~\ref{lem:char_tor_el} together with Lemmas~\ref{lem:cond_cent_up_cents1} and~\ref{lem:cond_cent_up_cents2}, and Remark~\ref{rmk:ctrlzer_up_grp_lt}, we get the following corollary.

\begin{corollary}\label{cor:tor_el_cent_up_cents}
	If $\sigma\in\Sgrp_\calU$ resp.\ a semisimple element $g\in\GL_\calU(q)$, $\Sp_\calU(q)$, $\GO_\calU(q)$, or $\GU_\calU(q)$ is torsion, then $\C(\sigma)$ resp.\ $\C(g)$ is equal to $\prod_\calU{\C(\sigma_i)}$ resp.\ $\prod_\calU{\C(g_i)}$ for each representative $(\sigma_i)_{i\in I}$ resp.\ $(g_i)_{i\in I}$ of $\sigma$ resp.\ $g$, where we require all $g_i$ ($i\in I$) to be semisimple.
\end{corollary}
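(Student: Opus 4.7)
The plan is to combine the torsion characterization in Lemma~\ref{lem:char_tor_el} with the centralizer equalities proved in Lemmas~\ref{lem:cond_cent_up_cents1} and~\ref{lem:cond_cent_up_cents2}, supplemented by Remark~\ref{rmk:ctrlzer_up_grp_lt} for the non-linear classical types. The key observation is that being torsion is strictly stronger than the hypotheses $\sum_{k\in\ints_+} q_k(\sigma) = 1$ or $\sum_{\chi\text{ primary}} q_\chi(g) = 1$ needed to invoke those two lemmas, so the corollary amounts to a straightforward concatenation of earlier results.

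Concretely, for $\sigma \in \Sgrp_\calU$ torsion, I would first invoke Lemma~\ref{lem:char_tor_el} to produce $N \in \ints_+$ with $\sum_{k=1}^N q_k(\sigma) = 1$; in particular $\sum_{k\in\ints_+} q_k(\sigma) = 1$, and Lemma~\ref{lem:cond_cent_up_cents1} immediately yields $\C(\sigma) = \prod_\calU \C(\sigma_i)$ for every representative $(\sigma_i)_{i\in I}$ of $\sigma$.

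For a semisimple torsion $g \in \GL_\calU(q)$ together with a representative $(g_i)_{i\in I}$ consisting of semisimple elements, Lemma~\ref{lem:char_tor_el} again produces $N$ with $\sum_{\chi\text{ primary},\ \deg(\chi) \leq N} q_\chi(g) = 1$. Because each $g_i$ is semisimple, only irreducible $\chi$ contribute to $q_\chi(g)$ (the subspaces $V_\chi(g_i)$ for non-irreducible primary $\chi$ are trivial), so this already gives $\sum_{\chi\text{ primary}} q_\chi(g) = 1$, and Lemma~\ref{lem:cond_cent_up_cents2} delivers $\C(g) = \prod_\calU \C(g_i)$. For $G$ of type $\Sp_\calU(q)$, $\GO_\calU(q)$, or $\GU_\calU(q)$ the same reasoning applies verbatim, with Remark~\ref{rmk:ctrlzer_up_grp_lt} used in place of Lemma~\ref{lem:cond_cent_up_cents2}.

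There is really no obstacle; the only point worth noting explicitly is the semisimplicity reduction in the classical cases, which lets one pass from the finite sum over primary polynomials of degree $\leq N$ to the full sum over all primaries. Once that is observed, the proof collapses to citing the three preceding results in sequence.
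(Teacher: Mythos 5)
Your proposal is correct and matches the paper's intended argument exactly: the paper itself dispenses with the corollary in one sentence, citing Lemma~\ref{lem:char_tor_el}, Lemmas~\ref{lem:cond_cent_up_cents1} and~\ref{lem:cond_cent_up_cents2}, and Remark~\ref{rmk:ctrlzer_up_grp_lt} in precisely the sequence you lay out. One small point: the passage from the finite sum $\sum_{\chi\text{ primary},\,\deg(\chi)\leq N}q_\chi(g)=1$ to $\sum_{\chi\text{ primary}}q_\chi(g)=1$ is already immediate from nonnegativity of the $q_\chi$ and the constraint $\sum q_\chi\leq 1$, so you do not actually need semisimplicity for that step; the semisimplicity observation is instead what reconciles the hypothesis of Lemma~\ref{lem:cond_cent_up_cents2} (stated over primary $\chi$) with the sum over irreducible $\chi$ that appears in its proof and in Remark~\ref{rmk:ctrlzer_up_grp_lt}, which is a slightly different but equally worthwhile point to flag.
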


\section{Centralizers in $\PGL_\calU(q)$, $\PSp_\calU(q)$, $\PGO_\calU(q)$, and $\PGU_\calU(q)$}\label{sec:cent_in_PGL_U}

Now we can deduce the structure of centralizers of semisimple elements from $\PGL_\calU(q)$, i.e., elements that lift to semisimple elements in $\GL_\calU(q)$.
Let $g=\overline{(g_i)}_{i\in I}\in\GL_\calU(q)$ be a semisimple element which maps to $\overline{g}\in\PGL_\calU(q)=\GL_\calU(q)/k^\times$. Here $g_i$ is also assumed to be semisimple ($i\in I$).

Assume that $h=\overline{(h_i)}_{i\in I}\in\GL_\calU(q)$ is such that $[g,h]=\mu\id$ for $\mu\in k^\times$, then
$g^h=\mu g$, so that $q(g)=q(g^h)=q(\mu g)=q(g).\mu$, i.e., $\mu\in\stab_{k^\times}(q(g))$.
Now let $\nu\in\stab_{k^\times}(q(g))\leq k^\times$ be a generator of this cyclic group.

It is now easy to see that the \emph{conformal centralizer} $\C_{\rm conf}(g)\coloneqq\set{h\in\GL_\calU(q)}[\text{there is }\mu\in k^\times\text{ such that }g^h=\mu g]$ is an extension 
$$
\C(g).\stab_{k^\times}(q(g))=\C(g).\gensubgrp{\nu}
$$ 
of $\C(g)$ by $\stab_{k^\times}(q(g))$. Hence $\C(\overline{g})=(\C(g).\gensubgrp{\nu})/k^\times$.

\begin{remark}
	The analog statement of Lemma~\ref{lem:cond_cent_up_cents2} is false in $\PGL_\calU(q)$.
	Indeed, take a semisimple element $\overline{g}\in\PGL_\calU(q)$ such that for a lift $g\in\GL_\calU(q)$ the group $\stab_{k^\times}(q(g))$ is non-trivial. Choose a representative $(g_i)_{i\in I}$ of $g\in\GL_\calU(q)$ such that $q_\chi(g_i)\neq q_\xi(g_i)$ for all $\chi,\xi\in k[X]$ distinct irreducible and $g_i$ is semisimple ($i\in I$). Then $\C(g_i)$ stabilizes each subspace $V_\chi(g_i)=\ker(\chi(g_i))\leq k^{n_i}$. But this means that, if $h\in C\coloneqq\prod_\calU{\C_{\rm conf}(g_i)}$, we have that $g^h=g$, so that $C/k^\times$ is properly contained in $\C(\overline{g})$ (namely, $\C(\overline{g})/(C/k^\times)\cong\stab_{k^\times}(q(g))$, which is non-trivial). 
\end{remark}

\begin{remark}
	For the groups $\PSp_\calU(q)$, $\PGO_\calU(q)$, and $\PGU_\calU(q)$ the same structure for $\C(\overline{g})$ holds, where $\Sp_\calU(q)$, $\GO_\calU(q)$ resp.\ $\GU_\calU(q)$ play the role of $\GL_\calU(q)$. The possible scalars $\mu\in k^\times$ (from $Z$) are restricted to $\mu\in\set{\pm1}$ in the symplectic or orthogonal case, and to $\mu^{q+1}=1$ in the unitary case.
\end{remark}

\section{Double centralizers of torsion elements}\label{sec:dbl_ctrlzr_tor_el_met_ups}

In this section, we compute the double centralizers of (semisimple) torsion elements of the groups $\overline{G}$ of type $\Sgrp_\calU$, $\PGL_\calU(q)$, $\PSp_\calU(q)$, $\PGO_\calU(q)$, and $\PGU_\calU(q)$. Note that for $g\in G$ a group element $\C(\C(g))=\Z(\C(g))$, since $g\in \C(g)$, so that $\C(\C(g))\leq \C(g)$. Set $\C^2(g)\coloneqq\C(\C(g))$ and $\C_{\rm conf}^2(g)\coloneqq\C_{\rm conf}(\C_{\rm conf}(g))$ to be the \emph{double centralizer} resp.\ \emph{double conformal centralizer} of $g$. Here $\C_{\rm conf}(g)\coloneqq\{h\in G\,|\,[g,h]\in\Z(G)\}$.

\subsection{The case $\Sgrp_\calU$}

Let $\sigma=\overline{(\sigma_i)}_{i\in I}\in\Sgrp_\calU=\overline{G}$ be torsion of order $o$. Then $\sum_{k\mid o}{q_k(\sigma)}=1$ by Lemma~\ref{lem:char_tor_el}. By Corollary~\ref{cor:tor_el_cent_up_cents} we have that $\C(\sigma)=\prod_\calU{\C(\sigma_i)}$.
But $\C(\sigma_i)$ has a subgroup 
$$
\prod_{k\mid o}{\Cycgrp_k\wr\Sym(c_k(\sigma_i))}
$$ 
which is dense in it along $\calU$, so that $C\coloneqq \C(\sigma)=\prod_\calU\prod_{k\mid o}{\Cycgrp_k\wr\Sym(c_k(\sigma_i))}$.

At first, for simplicity, assume that $\sigma_i$ is isotypic of type $(k^{c_{ik}})$ (so that $n_i=c_{ik}k$). Assume that $\tau=\overline{(\tau_i)}_{i\in I}\in \Z(C)$ and $\tau_i=(a_{ij}).\varphi_i\in \Cycgrp_k\wr\Sym(c_{ik})$. Assume that $\lim_\calU\card{\supp(\varphi_i)}/c_{ik}=\varepsilon>0$. Then we can conjugate $\varphi_i$ by $\phi_i\in\Sym(c_{ik})\leq \Cycgrp_k\wr\Sym(c_{ik})=\C(\sigma_i)$ such that 
$$
\lim_\calU{d_{\rm H}(\varphi_i\phi_i,\phi_i\varphi_i)}\geq\varepsilon>0.
$$ 
But this leads to the contradiction 
$$
\lim_\calU{d_{\rm H}}(\tau_i\phi_i,\phi_i\tau_i)\geq\varepsilon>0.
$$ 
Hence we may assume that $\varphi_i=\id$, applying a small change to $\tau_i$ along $\calU$ if necessary ($i\in I$).
Now assume that $\lim_\calU{\card{\set{j}[a_{ij}=c]}/c_{ik}}=\varepsilon\in(0,1)$. Then we find permutations $\phi_i\in\Sym(c_{ik})\leq \Cycgrp_k\wr\Sym(c_{ik})=\C(\sigma_i)$ such that $d_{\rm H}(\tau_i,\tau_i^{\phi_i})=\card{\set{j}[a_{ij}\neq a_{ij.\phi_i}]}/c_{ik}\geq\min\set{\varepsilon,1-\varepsilon}>0$. Hence we can assume that all $a_{ij}$ are equal.
This shows that, in this case, $\Z(\C(\sigma))$ is the metric ultraproduct $\prod_\calU{\Cycgrp_k}\cong \Cycgrp_k$ where $\Cycgrp_k$ in the $i$th component is generated by the element $\sigma_i$ itself ($i\in I$).

In the general case, we obtain that 
$$
\Z(\C(\sigma))=\prod_{k\mid o,q_k(\sigma)>0}\prod_\calU{\Cycgrp_k}\cong\prod_{k\mid o,q_k(\sigma)>0}{\Cycgrp_k}.
$$
This holds, because $\sigma\in \C(\sigma)$, so that, when $\tau\in \Z(\C(\sigma))$, it must commute with $\sigma$. But this implies that $\lim_\calU{\card{\Omega_k(\sigma_i)\triangle\Omega_k(\sigma_i).\tau_i}}=0$, so that $\tau$ must stabilize the isotypic components of $\sigma$ (in the sense of Remark~\ref{rmk:rest_subsp_met_up}), and we can apply the above argument.

\subsection{The case $\PGL_\calU(q)$, $\PSp_\calU(q)$, $\PGO_\calU(q)$, and $\PGU_\calU(q)$}\label{subsec:dbl_cent_cl_grps}

Recall that $k=\finfield_q$ when $G$ is $\GL_\calU(q)$, $\Sp_\calU(q)$, or $\GO_\calU(q)$, and $k=\finfield_{q^2}$ when $G=\GU_\calU(q)$. Set $d=1$ in the first three cases and $d=2$ when $G$ is unitary over $\finfield_{q^2}$.

Recall that $Z=k^\times$ when $G=\GL_\calU(q)$, $Z=\set{\pm1}\subseteq k^\times$ when $G=\Sp_\calU(q)$ or $G=\GO_\calU(q)$, and $Z=\set{z\in k^\times}[z^{q+1}=1]\subseteq k^\times=\finfield_{q^2}^\times$ when $G=\GU_\calU(q)$. Also, recall that, if $G$ is not of shape $\GL_\calU(q)$, we have $z^\sigma=z^{-1}$ for $z\in Z$, where $\sigma\colon k\to k$ is the identity in the symplectic and orthogonal case, and the $q$-Frobenius endomorphism $x\mapsto x^q$ when $G=\GU_\calU(q)$. Let $g=\overline{(g_i)}_{i\in I}\in G\leq\GL_\calU(k)$ be semisimple, with $g_i$ ($i\in I$) semisimple such that $\overline{g}\in\overline{G}\leq\PGL_\calU(k)=\GL_\calU(k)/k^\times$ is torsion of order dividing $o$, i.e., there is $\mu\in k^\times$ such that $g^o=\mu\id$. This implies $\mu\in Z$. 
Then 
$$
\sum_{\substack{\chi\text{ irreducible}\\ X^o\equiv\mu\, (\chi)}}{q_\chi(g)}=1
$$
by Lemma~\ref{lem:char_tor_el}. Set $P\coloneqq\set{\chi\in k[X]}[\chi\text{ (monic) irreducible},\, \chi\mid X^o-\mu]$, $T\coloneqq\stab_Z(q(g))$, $K_\chi\coloneqq k[X]/(\chi)$ for $\chi\in k[X]$ irreducible (as in Subsection~3.4.2~\S1 of \cite{schneider2019phd}), and $c_{i\chi}\coloneqq c_\chi(g_i)$ ($i\in I$).
Hence, similarly to the above, we have
$$
\C(g)=\prod_\calU\prod_{\substack{\chi\text{ irreducible}\\  X^o\equiv\mu\, (\chi)\\ q_\chi(g)>0}}{\mathbf M_{c_{i\chi}}(K_\chi)},
$$
the centralizer being computed in $\M_\calU(k)$. Now, by Section~\ref{sec:cent_in_PGL_U} we \enquote*{know} the structure of $\C_{\rm conf}(g)\leq G$. For $\chi\in k[X]$ irreducible consider the $g$-invariant subspace $V\coloneqq V_{\overline{\chi}}(g)\coloneqq\bigoplus_{\xi\in\overline{\chi}}{V_\xi(g)}\in\calV$, where $\overline{\chi}\coloneqq\orb_T(\chi)$ is the orbit of $\chi$ under $T$ (see Remark~\ref{rmk:def_subsp_up} for the definition of $V_\xi(g)\in\calV$). Set $l_\chi\coloneqq\card{\overline{\chi}}$ and $m_\chi\coloneqq\card{T}/l_\chi$. Note that $m_\chi=\card{\stab_T(\chi)}$, and so $m_\chi=\max\set{m\mid\card{T}}[\exists \chi':\chi=\chi'(X^m)]$. The restriction of the action of $\C_{\rm conf}(g)/Z$ to $V_{\overline{\chi}}(g)$ is given by 
$$
\left(\left(\prod_\calU\prod_{\xi\in\overline{\chi}}{\C\left(\rest{g}_{V_\xi(g)}\right)}\right)\rtimes T\right)/Z.
$$
We will explain this below.

\vspace{3mm}

\emph{Definition of the action of $T$.} In this situation, $t\in T\leq Z\leq k^\times$ acts as the map $\varphi_t$ which is constructed as follows: Find $K_\xi$-bases $(B_{\xi,i})_{i\in I}$ of each $V_\xi(g)$ ($\xi\in\overline{\chi}$ for all representatives $\chi$ of orbits of the action of $T$ on the irreducible polynomials; see Remark~\ref{rmk:bases}) and compatible bijections $\alpha_{\xi_1,\xi_2,i}\colon B_{\xi_1,i}\to B_{\xi_2,i}$ ($i\in I$; i.e., $\alpha_{\xi_2,\xi_3,i}\circ\alpha_{\xi_1,\xi_2,i}=\alpha_{\xi_1,\xi_3,i}$ for all $\xi_1,\xi_2,\xi_3\in\overline{\chi}$, all $\chi$, and all $i\in I$). If $G$ comes from groups preserving a form, we still find bijections $\bullet^\ast\colon B_{\xi,i}\to B_{\xi^\ast,i}$ such that $b^{\ast\ast}=b$, the pairing $f_i$ restricted to $K_\xi b\times K_{\xi^\ast}b^\ast\to k$ is non-singular, the pairing $f_i$ restricted to $K_\xi b\times K_{\xi'}b'$ is zero for all $b\in B_{\xi,i}$, $b'\in B_{\xi',i}$, $b'\neq b^\ast$, and such that $\bullet^\ast$ commutes with the maps $\alpha_{\xi_1,\xi_2,i}$ ($i\in I$). Such bases exist by the classification in Subsection~3.4.2~\S1 of \cite{schneider2019phd}. The last condition can be fulfilled, since $(\xi.t)^\ast=\xi^\ast.t$ for all $\xi\in k[X]$ and $t\in T$ by Remark~\ref{rmk:comm_*_Z}.
Define $\varphi_{t,i}$ by $\rest{\varphi_{t,i}}_{\gensubsp{B_{\xi,i}}_{K_\xi}}\colon \gensubsp{B_{\xi,i}}_{K_\xi}\to \gensubsp{B_{\xi.t,i}}_{K_{\xi.t}}$ to be the field isomorphism $\varphi_{\xi,t}\colon K_\xi=k[X]/(\xi)\to K_{\xi.t}=k[X]/(\xi.t)$; $\overline{X}\mapsto t\overline{X}$ applied to each $K_\xi$-multiple of a basis vector in $B_{\xi,i}$, i.e.,
$$
\varphi_{t,i}\left(\sum_{b\in B_{\xi,i}}{\lambda_b b}\right)=\sum_{b\in B_{\xi.t,i}}{\varphi_{\xi,t}(\lambda_b)\alpha_{\xi,\xi.t,i}(b)}.
$$
Doing this for all representatives $\chi$ of orbits of the action of $T$ on the irreducible polynomials $\chi\in k[X]$ with $\chi\mid X^o-\mu$, this defines, up to a small error in the rank metric, a map $\varphi_{t,i}\colon k^{n_i}\to k^{n_i}$, so set $\varphi_t$ to be $\overline{(\varphi_{t,i})}_{i\in I}$. 

\vspace{3mm}

\emph{The action of $T$ preserves the forms $f_i$ (and $Q_i$; $i\in I$).} Assume $G$ is not $\GL_\calU(q)$. Then one verifies that $T$ preserves the forms $f_i$ ($i\in I$): According to Subsection~3.4.2~\S1 of \cite{schneider2019phd}, for $b\in B_{\xi,i}$ the form $\rest{f_i}_{U\times U^\ast}\colon U\times U^\ast\coloneqq K_\xi b\times K_{\xi^\ast}b^\ast\to k$ is given as $U\times U^\ast\cong K_\xi\times K_{\xi^\ast}\ni(u,v)\mapsto\beta\tr_{K_\xi/k}(uv^\alpha)$ (where $\alpha\colon K_{\xi^\ast}\to K_\xi$ as remarked in Remark~3.33 of \cite{schneider2019phd}, noting that $K_\xi=R_\xi$ as $e=1$, since $g$ is semisimple, and $\beta$ is either one or a standard non-square in $k^\times$; the latter is only needed in Case~3.1 of Subsection~3.4.2~\S1 of \cite{schneider2019phd} when $G$ is orthogonal and $b=b^\ast$; but we can even neglect this case by Remark~3.38 of \cite{schneider2019phd}; so $\beta=1$). In Case~3.2 of Subsection~3.4.2~\S1 of \cite{schneider2019phd}, i.e., $p=2$, so $f_i$ is alternating and thus $b\neq b^\ast$, we can assume additionally that $Q(\lambda b+\mu b^\ast)=\lambda\mu\in k$, as all but at most one irreducible block have this shape $W(1)$ (cf.\ \cite[page~8 and Theorem~3.1]{gonshawliebeckobrien2017unipotent}).

Hence for $(u,v)\in K_\xi\times K_{\xi^\ast}\cong U\times U^\ast$ we obtain 
\begin{align*}
f_i(u.t,v.t)&=\tr_{K_{\xi.t}/k}(\varphi_t(u)\varphi_t(v)^\alpha)=\tr_{K_{\xi.t}/k}(\varphi_t(u)\varphi_t(v^\alpha))\\
&=\tr_{K_{\xi.t}/k}(\varphi_t(uv^\alpha))=\tr_{K_\xi/k}(uv^\alpha)=f_i(u,v).
\end{align*}
This holds, since the action of $T$ commutes with $\alpha$ and $\varphi_t$ is a field isomorphism. The former is verified as follows: Let $v\in k[X]$. Then $\varphi_t(v(\overline{X}))^\alpha=v(t\overline{X})^\alpha=v^\sigma(t^\sigma\overline{X}^{-1})=v^\sigma(t^{-1}\overline{X}^{-1})=\varphi_t(v^\sigma(\overline{X}^{-1}))=\varphi_t(v^\alpha)$, as desired, since by definition of $Z$ we have $t^\sigma=t^{-1}\in Z$. Here $\overline{X}$ is the image of $X$ in $K_\xi=k[X]/(\xi)$.

\vspace{3mm}

Now let us fix $h\in\C_{\rm conf}^2(g)$. We want to understand the shape of $h$.

\vspace{3mm}

\emph{Step~1: $h$ stabilizes each $V_\chi(g)$ ($\chi\in k[X]$ irreducible).} Assume that $h\in \C^2_{\rm conf}(\rest{g}_V)\leq\C_{\rm conf}(\rest{g}_V)$ does not stabilize each subspace $V_\xi(g)$ of $V$ ($\xi\in\overline{\chi}$). Write $\overline{\chi}=\set{\xi_1,\ldots,\xi_l}$ and assume that $V_{\xi_1}(g).h=V_{\xi_2}(g)$. Take $f=(M_1,M_2,\ast,\ldots,\ast)\in \C(\rest{g}_V)\leq\C_{\rm conf}(\rest{g}_V)$, where the $j$th component of $f$ acts on $V_{\xi_j}(g)$ ($j=1,\ldots,l$), then 
$$
f^h=h^{-1}fh=(\ast,M_1^h,\ast,\ldots,\ast).
$$

Now there are three cases according to the classification in Subsection~3.4.2~\S1 of \cite{schneider2019phd}: If $G=\GL_\calU(q)$, we can take $M_2=1_{V_{\xi_2}(g)}$ and $M_1$ far away from $k^\times\id_{V_{\xi_1}(g)}$. Then $[f,h]=(\ast,M_1^h,\ast,\ldots,\ast)$ is far away from $k^\times\id_V$. If $G$ is one of $\Sp_\calU(q)$, $\GO_\calU(q)$, or $\GU_\calU(q)$, $\xi_1$ is not self-dual and $\xi_2\neq\xi_1^\ast$, we can do the same as before. When $\xi_1^\ast=\xi_2$ in this case, we must have $M_2=(M_1^{-\sigma})^\top$, so that $[f,h]=(\ast,M_1^{\sigma\top} M_1^h,\ast,\ldots,\ast)$. Again we can choose $M_1\in\prod_\calU{\GL_{c_{i\xi_1}}(K_{\xi_1})}$ such that $(M_1^\sigma)^\top M_1^h$ is far away from $Z$. In the last case, $\xi_1=\xi_1^\ast$ is self-dual. Then again $M_1$ and $M_2$ are independent of each other and we can choose $M_2=1_{V_{\xi_2}(g)}$. The only restriction on $M_1$ is that it lies in $\prod_\calU{\GU_{c_{i\xi_1}}(K_{\xi_1})}$ if $\xi_1\neq X\pm1$ or $G$ is $\GU_\calU(q)$ (see Case~2 of Subsection~3.4.2~\S1 of \cite{schneider2019phd}) resp.\ $M_1\in\prod_\calU{X_{c_{i\xi_1}}(k)}$ in the opposite case when $\xi_1=X\pm1$, where $G=X_\calU(q)$ ($X=\Sp$ or $\GO$; see Case~3.1 of Subsection~3.4.2~\S1), so again we can choose $M_1$ such that $[f,h]=(\ast,M_1^h,\ast,\ldots,\ast)$ is far away from $Z$. In all cases, we get a contradiction. This shows that $h\in \C^2_{\rm conf}(g)$ fixes each $V_\chi(g)\in\calV$ ($\chi\in k[X]$ irreducible). 

\vspace{3mm}

Assume now that $\rest{h}_{V_\chi(g)}=M.\alpha$, where $\alpha$ corresponds to an element of $T^{l_\chi}=\set{t^{l_\chi}}[t\in T]$ which induces a non-trivial field automorphism on $K_\chi$. 

\vspace{3mm}

\emph{Step~2: The automorphism $\alpha$ equals the identity $\id_{K_\chi}$.} Then for $\lambda\in K_\chi$ we have $(\lambda\id)^h=\lambda^\alpha\id=(\lambda^\alpha\lambda^{-1})\lambda\id$. This implies that for all $\lambda\in K_\chi^\times$ stabilizing the forms $f_i$ (or $Q_i$; $i\in I$) on $V_\chi(g)$ we have $\lambda^\alpha\lambda^{-1}\in Z\leq k^\times$. When $G=\GL_\calU(q)$ or $\chi$ is not self-dual, there is no restriction on $\lambda$ (of course, if $G$ is one of $\Sp_\calU(q)$, $\GO_\calU(q)$, or $\GU_\calU(q)$, then if $h$ acts as $M$ on $V_\chi(g)$, it must act as $(M^{-\sigma})^\top$ on $V_{\chi^\ast}(g)$). Hence, in this case, for each $\lambda^\times\in K_\chi$ there exists $\kappa_\lambda\in k^\times$ such that $\lambda^\alpha\lambda^{-1}=\kappa_\lambda$. However, then every vector $\lambda\in K_\chi$ is an eigenvector of the $k$-linear map $\alpha$, which forces $\alpha=\id_{K_\chi}$, since $1\in K_\chi$ is fixed, a contradiction. 


In the opposite case, $G$ is one of $\Sp_\calU(q)$, $\GO_\calU(q)$, or $\GU_\calU(q)$ and $\chi$ is self-dual. Then we are in Case~2 and~3 of Subsection~3.4.2~\S1 of \cite{schneider2019phd}. Let $\tau\colon K_\chi\to K_\chi$ be the map defined there, i.e., $\rest{\tau}_k=\sigma$ and $\tau\colon \lambda\mapsto \lambda^{-1}$, where $\lambda\in K_\chi$ is the root of $\chi$. Then $\tau^2=\id_{K_\chi}$ and $\tau=\id_{K_\chi}$ if and only if we are in Case~3 of Subsection~3.4.2~\S1 of \cite{schneider2019phd}. Here, if we are in Case~2, $\rest{\C(g)}_{V_\chi(g)}$ is an ultraproduct of unitary groups over the field $K_\chi$ equipped with the involution $\tau$. In Case~3, $\rest{\C(g)}_{V_\chi(g)}$ is an ultraproduct of symplectic resp.\ orthogonal groups over $K_\chi=k$. We proceed as follows: Find totally singular $K_\chi$-subspaces $U=\overline{(U_i)}_{i\in I},U'=\overline{(U'_i)}_{i\in I},U''=\overline{(U''_i)}_{i\in I}\in\calV$ of $V_\chi(g)$ (in the sense of Remark~\ref{rmk:tot_sing}) such that $U\oplus U'=U\oplus U''=V_\chi(g)$, $U'\cap U''=0$ and $\dim(U)=\dim(U')=\dim(U'')=\dim(V_\chi(g))/2$. W.l.o.g., we may assume that $\dim_{K_\chi}(U_i)=\dim_{K_\chi}(U'_i)=\dim_{K_\chi}(U''_i)$ and that the restrictions $\rest{f_i}_{U_i\times U'_i}$ and $\rest{f_i}_{U_i\times U''_i}$ are non-degenerate ($i\in I$; as we may by modifying $U_i$, $U'_i$, and $U''_i$ a little if necessary). Then define $f'=\overline{(f'_i)}_{i\in I},f''=\overline{(f''_i)}_{i\in I}\in\C(g)\leq G$ such that
$f'_i$ and $f''_i$ act $F(\varphi)$-isotypically on $U_i$ and such that $f'_i$ resp.\ $f''_i$ act $F(\varphi^\ast)$-isotypically on $U'_i$ resp.\ $U''_i$ ($i\in I$) for a fixed irreducible polynomial $\varphi\in K_\chi[X]$ which is not self-dual with respect to $\tau$. Then $\rest{f'^h}_{V_\chi(g)}=z'\rest{f}_{V_\chi(g)}'$ and $\rest{f''^h}_{V_\chi(g)}=z''\rest{f''}_{V_\chi(g)}$ for $z',z''\in Z$. Note that $q_{\varphi.z'^{-1}}(\rest{z'f'}_{V_\chi(g)})=q_\varphi(\rest{f'}_{V_\chi(g)})=1/2$ and $q_{\varphi.z''^{-1}}(\rest{z''f''}_{V_\chi(g)})=q_\varphi(\rest{f''}_{V_\chi(g)})=1/2$, and $\varphi.z'^{-1}$ and $\varphi.z''^{-1}$ are both also not self-dual, since $\varphi\in K_\chi[X]$ is not self-dual and $z'^{-1},z''^{-1}\in Z$, so that $z'^{-\tau}=z'^{-\sigma}=z'$ and $z''^{-\tau}=z''^{-\sigma}=z''$, whence, e.g., $(\varphi.z'^{-1})^\ast=\varphi^\ast.z'^{-1}\neq\varphi.z'^{-1}$. Then $h$ must stabilize the decompositions $V_\chi(g)=U\oplus U'=U\oplus U''$, so it must stabilize $U$. But on the $h$-invariant totally isotropic subspace $U$, we can do the same argument as above for $G=\GL_\calU(q)$ to see that $\alpha=\id_{K_\chi}$.

\vspace{3mm}

Hence we have obtained that $\rest{h}_{V_\chi(g)}=M\in \prod_\calU\M_{c_{i\chi}}(K_\chi)$, so that $h\in\C(g)$.

\vspace{3mm}

\emph{Step~3: We have that $\rest{h}_{V_\chi(g)}=M=\lambda\id$ for $\lambda\in K_\chi$.}
According to Subsection~3.4.2~\S1 of \cite{schneider2019phd}, we can find $V_i$ ($i\in I$) such that $\overline{(V_i)}_{i\in I}=V_\chi(g)$ such that either all $V_i$ are totally singular (Case~1 of Subsection~3.4.2~\S1 of \cite{schneider2019phd}; if $\chi$ is not self-dual; this includes the case $G=\GL_\calU(q)$) or $H_i$ preserves a unitary form (Case~2) or a symplectic or orthogonal form (Case~3) over $K_\chi$ on $V_i$ ($i\in I$).
Note from the classification in Subsection~3.4.2~\S1 of \cite{schneider2019phd} that orthogonally indecomposable blocks involving a Frobenius block of size $\geq 2$ are non-central in the ambient projective linear classical group. This shows that $q_\xi(M)=0$ for all $\xi\in K_\chi[X]$ of degree $\geq 2$. Assume now that there exist distinct $\lambda,\mu\in K_\chi^\times$ such that $q_{X-\lambda}(M),q_{X-\mu}(M)\geq\varepsilon>0$. If $G=\GL_\calU(q)$ or we are in Case~1, $F(X-\lambda)\oplus F(X-\mu)=\diag(\lambda,\mu)\in\GL_2(K_\chi)$ is mapped to a non-central element in $\PGL_2(K_\chi)$, so that by the assumption, since we have \enquote*{many} of these blocks, $\rest{h}_{V_\chi(g)}$ would not commute modulo scalars with all of $\rest{\C(g)}_{V_\chi(g)}\cong\prod_\calU\M_{c_{i\chi}}(K_\chi)$. In Case~2, we use the same argument for a block of shape $\diag(\lambda,\lambda^{-\tau},\mu,\mu^{-\tau})$ acting on a four-dimensional $(K_\chi,\tau)$-unitary space. In Case~3, we use the same argument with a block $\diag(\lambda,\lambda^{-1},\mu,\mu^{-1})$ acting on a four-dimensional symplectic or orthogonal space. In total we get that $M=\lambda\id$ for $\lambda\in K_\chi$. If we are in Case~2 of Subsection~3.4.2~\S1 of \cite{schneider2019phd}, we have the additional assumption that $\N_\tau(\lambda)=1$, where $\N_\tau\colon K_\chi\to K_{\chi,\tau}$ is the norm defined there. In Case~3 of Subsection~3.4.2~\S1 of \cite{schneider2019phd}, we must have $\lambda^2=1$.

\vspace{3mm}

\emph{Step~4: The precise shape of $C\coloneqq\C_{\rm conf}^2(g)$.} We know now that $\rest{h}_{V_\chi(g)}=\lambda_\chi(h)\id$ for each irreducible $\chi\in k[X]$ and so $h$ commutes with all of $\C(g)$. In order that $h\in\C_{\rm conf}^2(g)$, we still need to check that $[h,T]\subseteq Z$. Now choose $t\in T$ to be a generator and $z\in Z$ and assume that $h$ $z$-commutes with $t$, i.e., $[h,\varphi_t]=z\id$. Let $\chi\in k[X]$ run through a system of representatives of the orbits of the action of $T$ and $\bullet^\ast$ on the irreducible polynomials (the action of $\bullet^\ast$ is only used when $G$ is not $\GL_\calU(q)$). This means $zh=h^{\varphi_t}$, so since $\rest{h}_{V_\chi(g)}=\lambda_\chi(h)\id$, we must have $\rest{h}_{V_{\chi.t}(g)}=z^{-1}\varphi_{\chi,t}(\lambda_\chi(h))\id_{V_{\chi.t}}$, so that $h$ is determined on all of $V=V_{\overline{\chi}}(g)$ by $\lambda_\chi(h)$. In this situation, the only condition that needs to be satisfied is that $\rest{h}_{V_\chi(g)}=\rest{h}_{V_{\chi.t^{l_\chi}}(g)}=\lambda_\chi(h)\id=z^{-l_\chi}\varphi_{\chi,t^{l_\chi}}(\lambda_\chi(h))\id$. Note that 
$$
\varphi_{\chi,t^l_\chi}\colon K_\chi\cong\finfield_{q^{dk_\chi}}\to K_\chi\cong\finfield_{q^{dk_\chi}}\quad (d=1,2)
$$ 
is given by $x\mapsto q^{dk_\chi/m_\chi}$, so that the previous condition becomes 
\begin{equation}\label{eq:cond}
z^{l_\chi}=(\lambda_\chi(h))^{q^{dk_\chi/m_\chi}-1}.
\end{equation}
Hence we can write $C$ as follows. When $G=\GL_\calU(q)$, we have

\begin{equation}\label{eq:C^2_lin_cs}
C=\lrset{h=\bigoplus_{\substack{\chi\text{ irreducible}\\ X^o\equiv\mu\, (\chi)\\ q_\chi(g)>0}}\lambda_\chi(h)\id_{V_\chi(g)}}[\exists z\in Z: \lambda_{\chi.t}(h)=z^{-1}\varphi_{\chi,t}(\lambda_\chi(h))\text{ for all }\chi].
\end{equation}

Here the condition from Equation~\eqref{eq:C^2_lin_cs} is equivalent to Equation~\eqref{eq:cond} for $\chi$ running through a system of representatives of the action of $T$ on the set $P\coloneqq\set{\chi\in k[X]\text{ irreducible}}[\chi\text{ divides } X^o-\mu\text{ and }q_\chi(g)>0]$. For $G$ one of $\Sp_\calU(q)$, $\GO_\calU(q)$, or $\GU_\calU(q)$ we have
\begin{equation}\label{eq:C^2_bil_un_cs}
C=\lrset{\bigoplus_{\substack{\chi\text{ irreducible}\\ X^o\equiv\mu\, (\chi)\\ q_\chi(g)>0}}\lambda_\chi(h)\id_{V_\chi(g)}}[R].
\end{equation}
where the condition $R$ is that there exists $z\in Z$ such that $\lambda_{\chi.t}(h)=z^{-1}\varphi_{\chi,t}(\lambda_\chi(h))$ for all $\chi\in P$ (as in the previous case) and $\lambda_\chi(h)\lambda_{\chi^\ast}(h)^\alpha=1$ for all $\chi\in P$, where $\alpha\colon R_{\chi^\ast}=K_{\chi^\ast}\to R_\chi=K_\chi$ is defined as in Remark~3.33 of \cite{schneider2019phd}. If $G$ is one of $\Sp_\calU(q)$ or $\GO_\calU(q)$ and $\chi=\chi^\ast\neq X\pm 1$ is self-dual, this means $k_\chi$ is even and $\lambda_\chi(h)^{q^{k_\chi/2}+1}=1$. Also, in this case, if $\chi=X\pm1$ it means $\lambda_\chi(h)^2=1$. If $G=\GU_\calU(q)$ and $\chi=\chi^\ast$, this means that $k_\chi$ is odd (since $\alpha$ needs to induce $\sigma:x\mapsto x^q$ on $k=\finfield_{q^2}$) and $\lambda_\chi(h)^{q^{k_\chi}+1}=1$.

\section{Distinction of metric ultraproducts}

Now we want to distinguish all (simple) metric ultraproducts $\overline{G}=\overline{X}_\calU(q)$ for distinct pairs $(X,q)$, where $X\in\set{\GL,\Sp,\GO,\GU}$ and $q$ is a prime power (all but $\PSp_{\calU_1}(q)$ and $\PGO_{\calU_2}(q)$ as mentioned in Theorem~\ref{thm:non_iso_main}).
For a group $H$ define the quantity 
$$
e_H(o)\coloneqq\max_{h\in H: h^o=1_H}{\exp(\C^2(h))}.
$$
Clearly, when $H\cong L$, we have $e_H(o)=e_L(o)$ for all values $o\in\ints_+$. Our strategy is to compute $e_H(o)$ for the groups $H=\overline{G}$, where $\overline{G}=\overline{X}_\calU(q)$ as above, for certain values of $o$ to distinguish these groups (with the only exception: $\PSp_{\calU_1}(q)\cong\PGO_{\calU_2}(q)$?).

\subsection{Computation of $e_{\overline{G}}(o)$ when $\gcd\set{o,p}=\gcd\set{o,\card{Z}}=1$}

If $o$ is coprime to $\card{Z}$ (and by semisimplicity of $g\in G$ coprime to $p$), from Subsection~\ref{subsec:dbl_cent_cl_grps} we can compute $e_{\overline{G}}(o)$. 
Note that in this situation, when $g^o=\mu\in Z$, we can replace $g$ by $g'=\lambda g\in G$ such that $g'^o=1$, choosing $\lambda\in Z$ such that $\lambda^o=\mu^{-1}$, since the homomorphism $Z\to Z$; $x\mapsto x^o$ is then bijective. So assume, w.l.o.g., $g^o=1$. Then $P\subseteq Q\coloneqq\set{\chi\in k[X]\text{ irreducible}}[\chi\text{ divides }X^o-1]$. 

\vspace{3mm}

\emph{The case $G=\GL_\calU(q)$.}  From Equation~\eqref{eq:C^2_lin_cs} we see that, the bigger the group $T$ is, for an element $h\in\C^2_{\rm conf}(g)$, the more restrictions are imposed to the scalars $\lambda_\chi(h)\in K_\chi^\times$ ($\chi\in P$). Also, the bigger the set $P$ is, the \enquote*{bigger} is the group $\C^2_{\rm conf}(g)$, i.e., there are more components. Hence, to optimize the exponent of $\C^2(\overline{g})=\C^2_{\rm conf}(g)/Z$, we choose $g$ such that $P=Q$ and $0<q_{X-1}(g)\neq q_\chi(g)>0$ for all $\chi\in P\setminus\set{X-1}$. Namely, then $T=\stab_Z(q(g))$ must fix the polynomial $X-1$, so that we must have $T=\trivgrp$. Set 
$$
f_q(o)\coloneqq\min\set{q^e-1}[o\text{ divides }q^e-1].
$$

Equation~\eqref{eq:C^2_lin_cs} then gives
\begin{equation}\label{eq:frmla_e_o_cprme_lin_cs}
e_{\overline{G}}(o)=\exp(\C^2(\overline{g}))=\exp(\C^2_{\rm conf}(g)/Z)=\begin{cases}
1 & \text{if } o=1\\
f_q(o) & \text{if } o>1
\end{cases}.
\end{equation}
Let us demonstrate Equation~\eqref{eq:frmla_e_o_cprme_lin_cs}. The first equality in it holds by the previous argument. When $o=1$, we have $\overline{g}=1_{\overline{G}}$ and so 
$$
\C^2(\overline{g})=\Z(\overline{G})=\trivgrp, 
$$
so that $e_{\overline{G}}(1)=1$. Now assume $o>1$. For each $\chi\in P$, if $\lambda\in\overline{k}^\times$ is a root of $\chi$, the condition that $\chi\mid X^o-1$ is equivalent to $\lambda^o=1$. Also $K_\chi=k[\lambda]$. Let $\mu\in\overline{k}^\times$ be an element of order $o$ with minimal polynomial $\xi\in k[X]$. Then, if $\lambda$ is a root of $\chi\in P$, we must have $\lambda^o=1$ and thus $\lambda=\mu^f$ for some $f\in\nats$. Hence $K_\chi=k[\lambda]=k[\mu^f]\subseteq k[\mu]=K_\xi$, so that in Equation~\eqref{eq:C^2_lin_cs} we have $\ord(\lambda_\chi(h))\mid\card{K_\chi^\times}\mid\card{K_\xi^\times}=f_q(o)$. This shows that 
\begin{align*}
\exp(\C^2(\overline{g}))\mid\exp(\C^2_{\rm conf}(g))&\mid\lcm\set{\card{K_\chi^\times}}[\chi\text{ divides }X^o-1]\\
&\quad=\card{K_\xi^\times}=\card{k[\mu]^\times}=f_q(o).
\end{align*}
To show the equality $\exp(\C^2(\overline{g}))=f_q(o)$, take $h\in\C^2_{\rm conf}(g)$ such that $\lambda_{X-1}(h)=1$ and $\lambda_\xi(h)$ has order $f_q(o)=\card{K_\xi^\times}$ in $\overline{k}^\times$. Then, when $\overline{h}^l=1_{\overline{G}}$, we must have $h^l\in Z$. But $\lambda_{X-1}(h)^l=1$, so that, since $q_{X-1}(g)>0$, it follows that $h^l=1_G$. Then $\lambda_\xi(h)^l=1$, so that $\exp(\C^2(\overline{g}))\geq l\geq\ord(\lambda_\xi(h))=f_q(o)$. This completes the proof.

\vspace{3mm}

\emph{The case $G=\Sp_\calU(q)$ or $\GO_\calU(q)$.} As in the linear case, Equation~\eqref{eq:C^2_bil_un_cs} shows that the optimal exponent of $\C^2(\overline{g})$ is obtained when $P=Q$ and $0<q_{X-1}(g)\neq q_\chi(g)>0$ for all $\chi\in P\setminus\set{X-1}$, so that $T=\trivgrp$. Set 
\begin{equation}\label{eq:def_f'_q(o)}
f'_q(o)\coloneqq\begin{cases}
q^{e/2}+1 & \text{ if } f_q(o)=q^e-1, e\text{ is even and }o\mid q^{e/2}+1\\
f_q(o) & \text{otherwise}
\end{cases}.
\end{equation}
Equation~\eqref{eq:C^2_bil_un_cs} then gives
\begin{equation}\label{eq:frmla_e_o_cprime_bil_cs}
e_{\overline{G}}(o)=\exp(\C^2(\overline{g}))=\exp(\C^2_{\rm conf}(g)/Z)=\begin{cases}
1 & \text{if } o=1\\
2 & \text{if } o=2\\
f'_q(o) & \text{if } o>2
\end{cases}.
\end{equation}
We demonstrate Equation~\eqref{eq:frmla_e_o_cprime_bil_cs}. If $o=1$, we obtain, as in the linear case, that $\C^2(\overline{g})=\trivgrp$ and so $e_{\overline{G}}(1)=1$. If $o=2$, $g^2=1$ and so $P=\set{X-1,X+1}$. From Equation~\eqref{eq:C^2_bil_un_cs} we see that, if $h\in\C^2_{\rm conf}(g)$, we have $\lambda_{X-1}(h)^2=\lambda_{X+1}(h)^2=1$, so that $h^2=1$. Also, defining $h$ by $\lambda_{X-1}(h)\coloneqq 1$ and $\lambda_{X+1}(h)=-1$, we obtain $h\notin Z$, so $\ord_{\overline{G}}(\overline{h})=e_{\overline{G}}(2)=2$ ($1\neq -1$, since the case $p=2$ does not occur due to the condition $\gcd\set{o,p}=1$). Assume now that $o>2$. As in the linear case, for each $\chi\in P$, if $\lambda\in\overline{k}$ is a root of $\chi$, the condition that $\chi\mid X^o-1$ is equivalent to $\lambda^o-1$. Choose $\mu\in\overline{k}^\times$ of order $o$ and let $\xi\in P$ be its minimal polynomial. Then, as previously, if $\lambda$ is a root of $\chi\in P$, we have $\lambda=\mu^f$ for some $f\in\nats$. There are two cases: 

In the first case, $\xi$ is not self-dual. This means that $\mu$ and $\mu^{-1}$ are not conjugate in $K_\xi/k$. If they were conjugate, say by an automorphism $\alpha$, i.e., $\mu^\alpha=\mu^{-1}$, then $\alpha\in\Gal(K_\xi/k)$ needs to be the unique involution (since $\mu\neq\mu^{-1}$ as $o>2$) given by $x\mapsto x^{q^{k_\xi/2}}$; in particular, $e=k_\xi$ would need to be even. Hence this case is equivalent to either $e=k_\xi$ being odd or $\mu\mu^\alpha=\mu^{q^{k_\xi/2}+1}\neq 1$, i.e., $o\nmid q^{e/2}+1=q^{k_\xi/2}+1$. This is precisely the opposite of the first case in Equation~\eqref{eq:def_f'_q(o)}. Here for an element $h\in\C^2_{\rm conf}(g)$ we can choose $\lambda_\xi(h)\in K_\xi^\times=k[\mu]^\times$ arbitrarily ($\lambda_{\xi^\ast}(h)$ is then determined by $\lambda_\xi(h)$). Arguing as in the linear case, we obtain $\exp(\C^2(\overline{g}))=f_q(o)$. Indeed, for $h\in\C^2_{\rm conf}(g)$, as above, $\ord(\lambda_\chi(h))\mid f_q(o)$ and defining $h$ such that $\lambda_{X-1}(h)=1$ and $\lambda_\xi(h)$ has order $f_q(o)$, we see that $\ord_{\overline{G}}(\overline{h})=e_{\overline{G}}(o)=f_q(o)$. 

In the opposite case, $\xi$ is self-dual and $\xi\neq X\pm1$ as $o>2$. Then $e=k_\xi$ needs to be even and
$$
\mu\mu^\alpha=\mu^{q^{k_\xi/2}+1}=\lambda_\xi(h)\lambda_\xi(h)^\alpha=\lambda_\xi(h)^{q^{k_\xi/2}+1}=1,
$$ 
where $\alpha$ is the involution $x\mapsto x^{q^{k_\xi/2}}$ of $K_\xi\cong\finfield_{q^{k_\xi}}$ from Subsection~3.4.2~\S1 of \cite{schneider2019phd} Case~2. This means that $o\mid q^{e/2}+1$ and we are in the first case of Equation~\eqref{eq:def_f'_q(o)}. Note that for each $\chi\in P$ the map $\alpha$ restricts to an automorphism of each $K_\chi\subseteq K_\xi$ of order dividing two (as all the fields are finite). Then $\rest{\alpha}_{K_\chi}=\id$ if and only if $k_\chi\mid k_\xi/2$, and $\rest{\alpha}_{K_\chi}$ is the unique involution of $K_\chi$ if $k_\xi/k_\chi$ is odd.
Now, if $\lambda\in\overline{k}^\times$ is a root of $\chi$, then in the first case $\lambda^2=1$, and in the second case $\lambda^{q^{k_\chi/2}+1}=1$; so all $\chi\in P$ are self-dual. Hence, if $h\in\C^2_{\rm conf}(g)$, for each $\chi\in P$ one of $\lambda_\chi(h)^2=1$ or $\lambda_\chi(h)^{q^{k_\chi/2}+1}=1$ must hold. But $2\mid q^{k_\xi/2}+1$ if $p>2$, and $q^{k_\chi/2}+1\mid q^{k_\xi/2}+1$ in the second case, since $k_\xi/k_\chi$ is then odd. Hence $\exp(\C^2_{\rm conf}(g))\mid q^{e/2}+1=q^{k_\xi/2}+1=f'_q(o)$. Defining $h\in\C^2_{\rm conf}(g)$ such that $\lambda_{X-1}(h)=1$ and $\lambda_\xi(h)$ has order $f'_q(o)$, we see that $\ord_{\overline{G}}(\overline{h})=f'_q(o)$.

\vspace{3mm}

\emph{The case $G=\GU_\calU(q)$.} Here, as well, Equation~\eqref{eq:C^2_bil_un_cs} shows that the optimal exponent of $\C^2(\overline{g})$ is obtained when $P=Q$ and $0<q_{X-1}(g)\neq q_\chi(g)>0$ for all $\chi\in P\setminus\set{X-1}$, so that $T=\trivgrp$. Set
$$
f''_q(o)\coloneqq\begin{cases}
q^e+1 & \text{ if } f_{q^2}(o)=q^{2e}-1,\, e\text{ is odd and }o\mid q^e+1\\
f_{q^2}(o) & \text{otherwise}
\end{cases}.
$$
Equation~\eqref{eq:C^2_bil_un_cs} gives
\begin{equation}\label{eq:frmla_e_o_cprime_un_cs}
e_{\overline{G}}(o)=\exp(\C^2(\overline{g}))=\exp(\C^2_{\rm conf}(g)/Z)=\begin{cases}
1 & \text{if } o=1\\
f''_q(o) & \text{if } o>1
\end{cases}.
\end{equation}
Again $e_{\overline{G}}(1)=1$ is clear. If $o>1$, take $\mu\in\overline{k}^\times$ of order $o$ with minimal polynomial $\xi$. Then the argument proceeds as in the bilinear case. But the condition that $\xi$ is self-dual is here equivalent to $\mu$ being conjugate to $\mu^{-1}$ in $K_\xi$ by an automorphism $\alpha$ such that $\rest{\alpha}_k=\sigma$; $x\mapsto x^q$. This forces $e=k_\xi$ to be odd and $\mu^{q^{dk_\xi/2}+1}=1$, i.e., $o\mid q^e+1=q^{k_\xi}+1$.

\subsection{Proof of Theorem~\ref{thm:non_iso_main}}

Set $G_j\coloneqq X_{j\calU_j}(q_j)$, $Z_j\coloneqq\Z(G_j)$, and $\overline{G}_j\coloneqq G_j/Z_j$ ($j=1,2$). Let $p_j$ be the characteristic of the field $\finfield_{q_j}$ ($j=1,2$). Assume that $\overline{G}\cong\overline{G}_1\cong\overline{G}_2$. We start by showing that $p_1=p_2$.

\vspace{3mm}

\emph{Determining the characteristic $p$.} Choose $o$ large enough and coprime to $p_1$, $p_2$, $\card{Z_1}$, $\card{Z_2}$. Then from Equations~\eqref{eq:frmla_e_o_cprme_lin_cs}, \eqref{eq:frmla_e_o_cprime_bil_cs}, and \eqref{eq:frmla_e_o_cprime_un_cs} we see that $e_{\overline{G}}(o)$ is of the form $q_1^{e_1}\pm 1$ and $q_2^{e_2}\pm1$. If $e_{\overline{G}}(o)=q_1^{e_1}-1=q_2^{e_2}-1$ or $e_{\overline{G}}(o)=q_1^{e_1}+1=q_2^{e_2}+1$, we have $q_1^{e_1}=q_1^{e_2}$, so that $p_1=p_2$ by the uniqueness of the prime factorization. So, w.l.o.g., we have $e_{\overline{G}}(o)=q^{e_1}-1=q_2^{e_2}+1$ for infinitely many $o$, and so for infinitely many pairs $(e_1,e_2)\in\ints_+^2$. If $p_1\neq p_2$ we get a contradiction to Corollary~1.8 of \cite{evertse2019website}. Hence $p_1=p_2\eqqcolon p$.

\vspace{3mm}

\emph{Determining $q^d$.} We can now assume that $q_j=p^{e_j}$ ($j=1,2$). Choose $j\in\set{1,2}$ and set $X\coloneqq X_j$, $q\coloneqq q_j$, and $d\coloneqq d_j$. Consider the quantity $f\coloneqq\gcd\set{e_{\overline{G}}(o)}[o\in O]$, where
$$
O\coloneqq\set{o\in\ints_+}[2<o\text{ coprime to }p,\card{Z_1},\card{Z_2};\, e_{\overline{G}}(o)\equiv -1\text{ modulo }p^3]
$$
From Equations~\eqref{eq:frmla_e_o_cprme_lin_cs}, \eqref{eq:frmla_e_o_cprime_bil_cs}, and \eqref{eq:frmla_e_o_cprime_un_cs} it follows that for every element $o\in O$ the number $e_{\overline{G}}(o)$ is either of the form $q^{de}-1$ or $q^e+1$. But the second case is excluded by the condition that $e_{\overline{G}}(o)\equiv-1$ modulo $p^3$. Hence $q^d-1\mid e_{\overline{G}}(o)=q^{de}-1$ and so $q^d-1\mid f$.

For a prime $r$ set $t_r\coloneqq\frac{q^r-1}{q-1}$. Then for distinct primes $r$ and $s$ we have
\begin{align*}
\gcd\set{t_r,t_s}&=\gcd\lrset{\frac{q^r-1}{q-1},\frac{q^s-1}{q-1}}\\
&=\frac{1}{q-1}\gcd\set{q^r-1,q^s-1}=\frac{q^{\gcd\set{r,s}}-1}{q-1}=1.
\end{align*}
Hence the numbers $t_r$ ($r$ prime), being pairwise coprime, have arbitrarily large prime divisors. Take for $r>2$ a prime such that $t_r$ has a prime divisor $o>p,\card{Z_1},\card{Z_2},q^d-1$. Then $o$ is coprime to $p$, $\card{Z_1}$, and $\card{Z_2}$, so that by Equations~\eqref{eq:frmla_e_o_cprme_lin_cs}, \eqref{eq:frmla_e_o_cprime_bil_cs}, and \eqref{eq:frmla_e_o_cprime_un_cs} we have $e_{\overline{G}}(o)\mid f_{q^d}(o)\mid q^{dr}-1$, as $o\mid q^r-1\mid q^{dr}-1$. Hence the number $f_{q^d}(o)$ must be one of $q^{dr}-1$ or $q^d-1$, the latter being excluded by the condition $o>q^d-1$; so $f_{q^d}(o)=q^{dr}-1$. If $X=\GL$, $X=\Sp$, or $X=\GO$, since $r$ is odd and $d=1$, Equations~\eqref{eq:frmla_e_o_cprme_lin_cs} and \eqref{eq:frmla_e_o_cprime_bil_cs} show that we must have $e_{\overline{G}}(o)=f_{q^d}(o)=q^{dr}-1=q^r-1\equiv -1$ modulo $p^3$. Hence, in this case, $o\in O$. If $X=\GU$, it could be that $e_{\overline{G}}(o)=q^r+1$, when $o\mid q^r+1$. However, $\gcd\set{q^r+1,t_r}\mid\gcd\set{q^r+1,q^r-1}\mid 2$ and $t_r$ is always odd, so that $\gcd\set{q^r+1,t_r}=1$ and hence, as $o\mid t_r$, also $\gcd\set{q^r+1,o}=1$. This shows that here also $e_{\overline{G}}(o)=f_{q^d}(o)=q^{dr}-1=q^{2r}-1\equiv -1$ modulo $p^3$. Therefore again $o\in O$.

Applying this argument for two different primes $r$, say $r_1$ and $r_2$, which produces two different primes $o$, say $o_1$ and $o_2$, we get $f=\gcd\set{e_{\overline{G}}(o)}[o\in O]\mid\gcd\set{e_{\overline{G}}(o_1),e_{\overline{G}}(o_2)}=\gcd\set{q^{dr_1}-1,q^{dr_2}-1}=q^{\gcd\set{dr_1,dr_2}}-1=q^d-1$.

Altogether, we have shown that $f=q^d-1$. Plugging in $j=1,2$, we obtain $q_1^{d_1}-1=q_2^{d_2}-1$ implying that $q_1^{d_1}=q_2^{d_2}$.

\vspace{3mm}

Now we exclude all remaining possible isomorphisms but $\PSp_{\calU_1}(q)\cong\PGO_{\calU_2}(q)$.

\vspace{3mm}

\emph{Proof that $\PGL_{\calU_1}(q)\not\cong\PSp_{\calU_2}(q)$ and $\PGL_{\calU_1}(q)\not\cong\PGO_{\calU_2}(q)$.} 
Let $G_1=\GL_{\calU_1}(q)$ and $G_2=X_{\calU_2}(q)$, where $X=\Sp$ or $\GO$. Set
\begin{equation}\label{eq:def_o}
o\coloneqq\begin{cases}
\frac{q^2+1}{2} & \text{if }p>2\\
q^2+1 & \text{if }p=2
\end{cases}.
\end{equation}
Note that $o>2$ is coprime to $p$, $\card{Z_1}=q-1$, and $\card{Z_2}=\card{\set{\pm1}}$. Hence by Equation~\eqref{eq:frmla_e_o_cprime_bil_cs} we have $e_{\overline{G}}(o)=e_{\overline{G}_2}(o)=q^2+1$. Indeed, $o\mid q^2+1\mid q^4-1$.
But $o\nmid q^f-1$ for $f$ properly dividing $4$, since then $o\mid q^2-1$, but it is easy to see that $\gcd\set{o,q^2-1}=1$. This shows $f_q(o)=q^4-1$ and $e_{\overline{G}}(o)=e_{\overline{G}_2}(o)=f'_q(o)=q^2+1$. But then by $\overline{G}_1\cong\overline{G}_2$ we obtain $e_{\overline{G}}(o)=e_{\overline{G}_1}(o)=f_q(o)=q^4-1>q^2+1=e_{\overline{G}_2}(o)=e_{\overline{G}}(o)$, a contradiction.

\vspace{3mm}

\emph{Proof that $\PSp_{\calU_1}(q^2)\not\cong\PGU_{\calU_2}(q)$ and $\PGO_{\calU_1}(q^2)\not\cong\PGU_{\calU_2}(q)$.} Let $G_1=X_{\calU_1}(q^2)$, where $X=\Sp$ or $\GO$, and $G_2=\GU_{\calU_2}(q)$. Define $o$ as in Equation~\eqref{eq:def_o}. Note that $o>2$ is coprime to $p$, $\card{Z_1}=\card{\set{\pm1}}$, and $\card{Z_2}=q+1$. Then by Equation~\eqref{eq:frmla_e_o_cprime_bil_cs} we have $e_{\overline{G}}(o)=e_{\overline{G}_1}(o)=f'_{q^2}(o)=q^2+1$ (as above). But by Equation~\eqref{eq:frmla_e_o_cprime_un_cs} we obtain that $e_{\overline{G}}=e_{\overline{G}_2}(o)=f_q''(o)=q^4-1>q^2+1=e_{\overline{G}_1}(o)=e_{\overline{G}}(o)$, since $e=2$ is even, a contradiction.

\vspace{3mm}

\emph{Proof that $\PGL_{\calU_1}(q^2)\not\cong\PGU_{\calU_2}(q)$.} Let $G_1=\GL_{\calU_1}(q^2)$ and $G_2=\GU_{\calU_2}(q)$. Set
$$
o\coloneqq\begin{cases}
\frac{q^5+1}{5(q+1)} & \text{if }q\equiv -1\text{ modulo }5\\
\frac{q^5+1}{q+1} & \text{otherwise}
\end{cases}.
$$
Note that $o$ is coprime to $p$, $\card{Z_1}=q^2-1$, and $\card{Z_2}=q+1\mid q^2-1$. Indeed, $\gcd\set{o,q+1}\mid\gcd\set{\frac{q^5+1}{q+1},q+1}=\gcd\set{5,q+1}\mid 5$. But $5\nmid o$, so that $\gcd\set{o,q+1}=1$. Similarly, $\gcd\set{o,q-1}\mid\gcd\set{q^5+1,q-1}\mid\gcd\set{2,q-1}\mid 2$. But $o$ is always odd, so $\gcd\set{o,q-1}=1$. We have that $o\mid q^{10}-1$, so that from Equation~\eqref{eq:frmla_e_o_cprme_lin_cs} we obtain that $e_{\overline{G}}(o)=e_{\overline{G}_1}(o)=f_{q^2}(o)$ is either $q^{10}-1$ or $q^2-1$. But clearly $q^2-1<o$, so that we must have $f_{q^2}(o)=q^{10}-1$. But Equation~\eqref{eq:frmla_e_o_cprime_un_cs} gives that $e_{\overline{G}}(o)=e_{\overline{G}_2}(o)=f''_q(o)=q^5+1<q^{10}-1=f_q(o)=e_{\overline{G}_1}(o)$, a contradiction.

\begin{remark}
	If $q_i\to_\calU\infty$, then double centralizers of semisimple torsion elements are infinite groups.
\end{remark}

\begin{remark}
	If $q$ is even, then $\PSp_{\calU_1}(q)\cong\PGO_{\calU_2}(q)$ is possible due to the isomorphism $\Sp_{2m}(q)\cong\GO_{2m+1}(q)$.
	Also it seems hard to distinguish a group $\PSp_{\calU_1}(q)$ from a group $\PGO_{\calU_2}(q)$ for $q$ odd.
\end{remark}

\section*{Acknowledgments}
The author wants to thank Andreas Thom for interesting discussions about the topic. The content of this article is part of the PhD project \cite{schneider2019phd} of the author. This research was supported by ERC Consolidator Grant No.\ 681207. 

\begin{bibdiv}
\begin{biblist}
\bib{evertse2019website}{webpage}{
	author={Evertse, Jan-Hendrik},
	title={Linear forms in logarithms},
	accessdate={April, 2019},
	url={http://www.math.leidenuniv.nl/~evertse/dio2011-linforms.pdf}
}
\bib{gonshawliebeckobrien2017unipotent}{article}{
	title={Unipotent class representatives for finite classical groups},
	author={Gonshaw, Samuel},
	author={Liebeck, Martin},
	author={O'Brien, Eamon},
	journal={Journal of Group Theory},
	volume={20},
	number={3},
	pages={505--525},
	year={2017},
	publisher={De Gruyter}
}
\bib{liebeckshalev2001diameters}{article}{
	title={Diameters of finite simple groups: {S}harp bounds and applications},
	author={Liebeck, Martin},
	author={Shalev, Aner},
	journal={Annals of mathematics},
	pages={383--406},
	year={2001},
	publisher={JSTOR}
}
\bib{schneider2019phd}{thesis}{
	author={Schneider, Jakob},
	title={On ultraproducts of compact quasisimple groups},
	type={PhD thesis}
	school={TU Dresden},
	year={2019},
	status={to appear on \url{http://www.qucosa.de}}
}
\bib{thomwilson2014metric}{article}{
	title={Metric ultraproducts of finite simple groups},
	author={Thom, Andreas},
	author={Wilson, John},
	journal={Comptes Rendus Math{\'e}matique},
	volume={352},
	number={6},
	pages={463--466},
	year={2014},
	publisher={Elsevier}
}
\bib{thomwilson2018some}{article}{
	title={Some geometric properties of metric ultraproducts of finite simple groups},
	author={Thom, Andreas},
	author={Wilson, John},
	journal={Israel Journal of Mathematics},
	volume={227},
	number={1},
	pages={113--129},
	year={2018},
	publisher={Springer Science \& Business Media}
}
\end{biblist}
\end{bibdiv} 
\end{document}